\DeclareMathAlphabet{\mathpzc}{OT1}{pzc}{m}{it}
\newcommand{\supp}{\text{supp}}
\newcommand{\sw}{\bigcurlyvee}
\newcommand{\ds}{\displaystyle}
\newcommand{\ui}{I}
\newcommand{\mcg}{\mathcal{G}}
\newcommand{\mcw}{\mathcal{W}}
\newcommand{\bbe}{\mathbb{E}}
\newcommand{\bbn}{\mathbb{N}}
\newcommand{\bbr}{\mathbb{R}}
\newcommand{\bbz}{\mathbb{Z}}
\newcommand{\im}{\operatorname{Im}}
\newtheorem{theorem}{Theorem}[section]
\newtheorem{lemma}[theorem]{Lemma}
\newtheorem{proposition}[theorem]{Proposition}
\newtheorem{corollary}[theorem]{Corollary}
\theoremstyle{definition}\newtheorem{definition}[theorem]{Definition}
\newtheorem{example}[theorem]{Example}
\newtheorem{remark}[theorem]{Remark}
\begin{document}
\title{The \v{C}ech homotopy groups of a shrinking wedge of spheres}
\keywords{homotopy groups, Whitehead products, homotopy group operations, infinite products}
\author[J. Brazas]{Jeremy Brazas}
\address{West Chester University\\ Department of Mathematics\\
West Chester, PA 19383, USA}
\email{jbrazas@wcupa.edu}

\subjclass[2010]{Primary 	55Q07,55Q52  ; Secondary 	55Q20,55Q15,08A65    }
\keywords{\v{C}ech homotopy group, shape homotopy group, Hilton-Milnor Theorem, homotopy groups of spheres, n-dimensional infinite earring}

\date{\today}

\begin{abstract}
We compute the \v{C}ech homotopy groups of the $m$-dimensional infinite earring space $\mathbb{E}_m$, i.e. a shrinking wedge of $m$-spheres. In particular, for all $n,m\geq 2$, we prove that $\check{\pi}_n(\mathbb{E}_m)$ is isomorphic to a direct sum of countable powers of homotopy groups of spheres: $\bigoplus_{1\leq j\leq \frac{n-1}{m-1}}\left(\pi_{n}(S^{mj-j+1})\right)^{\mathbb{N}}$. Equipped with this isomorphism and infinite-sum algebra, we also construct new elements of $\pi_n(\mathbb{E}_m)$ with a view toward characterizing the image of the canonical homomorphism $\Psi_{n}:\pi_n(\mathbb{E}_m)\to \check{\pi}_{n}(\mathbb{E}_m)$. We prove that $\Psi_{n}$ is a split epimorphism when $n\leq 2m-1$ and we identify a candidate for the image of $\Psi_n$ when $n>2m-1$.
\end{abstract}

\maketitle

\section{Introduction}

In 1962, Barratt and Milnor \cite{BarrattMilnor} proved that the rational homology groups of a shrinking wedge of $m$-spheres  ($m\geq 2$), which we refer to as the \textit{$m$-dimensional earring space} $\bbe_m$, are uncountable in arbitrarily high dimensions. In 2000, Eda and Kawamura proved that $\bbe_m$ is $(m-1)$-connected and locally $(m-1)$-connected and that $\pi_m(\bbe_m)$ is canonically isomorphic to the Baer-Specker group $\bbz^{\bbn}$. Built into the proof of Eda-Kawamura's Theorem is the striking fact that higher homotopy groups are ``infinitely commutative" in a strong sense. 

It remains an open problem to give a full description of the groups $\pi_n(\bbe_m)$, $n>m$ in the spirit of the Hilton-Milnor Theorem. In this paper we give a complete description of the \v{C}ech homotopy groups $\check{\pi}_{n}(\bbe_m)$ in terms of the homotopy groups of spheres for all $n,m\geq 2$. We build off of the computation of $\check{\pi}_{m+1}(\bbe_m)$, $m\geq 2$ established by K. Kawamura in \cite{Kawamurasuspensions}. Our first main result is the following.

\begin{theorem}\label{thm1}
For all $m,n\geq 2$, there is a isomorphism \[\check{\pi}_n(\bbe_m)\cong \bigoplus_{1\leq j\leq \frac{n-1}{m-1}}\left(\pi_{n}(S^{mj-j+1})\right)^{\bbn}.\]
\end{theorem}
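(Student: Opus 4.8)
The plan is to realize $\check\pi_n(\bbe_m)$ as an inverse limit of homotopy groups of finite wedges of $m$-spheres and then to apply the Hilton--Milnor theorem termwise. For each $k$ let $\bbe_m^{(k)}=\bigvee_{i=1}^{k}S^m$ be the wedge obtained from $\bbe_m$ by collapsing all but the first $k$ spheres, and let $r_k\colon\bbe_m^{(k+1)}\to\bbe_m^{(k)}$ be the retraction collapsing the $(k+1)$-st sphere. The inverse sequence $(\bbe_m^{(k)},r_k)$ is an expansion of $\bbe_m$ in the homotopy category of polyhedra --- standard for the $m$-dimensional earring, and underlying Kawamura's computation in \cite{Kawamurasuspensions} --- so that
\[
\check\pi_n(\bbe_m)\;\cong\;\varprojlim_k\,\pi_n\Bigl(\bigvee_{i=1}^{k}S^m\Bigr).
\]

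Next I would set up the Hilton--Milnor decomposition with Hall bases chosen compatibly as $k$ grows. Fixing an ordered countable alphabet $x_1,x_2,\dots$ and a Hall basis of basic products for which those in $x_1,\dots,x_k$ form an initial segment of those in $x_1,\dots,x_{k+1}$, let $N_{k,j}$ be the (finite) number of weight-$j$ basic products in $x_1,\dots,x_k$ --- a Witt/necklace number, increasing to $\infty$ in $k$ for each $j\ge 1$. Since $S^m=\Sigma S^{m-1}$, the Hilton--Milnor theorem yields for each $k$ an isomorphism
\[
\pi_n\Bigl(\bigvee_{i=1}^{k}S^m\Bigr)\;\cong\;\bigoplus_{j\ge 1}\bigl(\pi_n(S^{j(m-1)+1})\bigr)^{N_{k,j}},
\]
a basic product of weight $j$ contributing a sphere $S^{j(m-1)+1}=S^{mj-j+1}$; as $\pi_n(S^q)=0$ for $q>n$, only the finitely many $j$ with $1\le j\le\frac{n-1}{m-1}$ contribute. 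The step carrying the weight of the argument is to verify that, under these isomorphisms, $r_{k\ast}$ becomes the coordinate projection forgetting exactly the summands indexed by basic products that involve $x_{k+1}$. I would deduce this from naturality of the James/Hilton product splitting of $\Omega\Sigma(-)$ with respect to wedgewise maps, applied to $r_k=\mathrm{id}_{\bigvee_{i=1}^k S^m}\vee\, c$ with $c\colon S^m\to\ast$ constant: naturality identifies $\Omega r_k$ up to homotopy with the projection of the Hilton product for $\bigvee^{k+1}S^m$ onto the subproduct of factors indexed by basic products not involving $x_{k+1}$, and the latter subproduct is precisely the Hilton product for $\bigvee^{k}S^m$.

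Granting this, the inverse system $\bigl(\pi_n(\bigvee^{k}S^m),\,r_{k\ast}\bigr)$ is isomorphic to the finite product, over $1\le j\le\frac{n-1}{m-1}$, of the systems $\bigl((\pi_n(S^{mj-j+1}))^{N_{k,j}},\ \mathrm{proj}\bigr)$. For each such $j$, since $N_{k,j}\to\infty$ and the bonds are coordinate projections, $\varprojlim_k(\pi_n(S^{mj-j+1}))^{N_{k,j}}\cong(\pi_n(S^{mj-j+1}))^{\bbn}$; as $\varprojlim$ commutes with this finite product over $j$ we obtain
\[
\check\pi_n(\bbe_m)\;\cong\;\bigoplus_{1\le j\le\frac{n-1}{m-1}}\bigl(\pi_n(S^{mj-j+1})\bigr)^{\bbn},
\]
which is the assertion. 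I expect the principal obstacle to be the compatibility claim for the bonding maps --- making the naturality of the Hilton--Milnor splitting precise enough to identify $r_{k\ast}$ with a projection in the chosen bases --- while the reduction $\check\pi_n(\bbe_m)\cong\varprojlim_k\pi_n(\bbe_m^{(k)})$ and the combinatorics of the counts $N_{k,j}$ should be comparatively routine.
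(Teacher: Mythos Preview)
Your proposal is correct and follows essentially the same approach as the paper: realize $\check{\pi}_n(\bbe_m)$ as the inverse limit of $\pi_n\bigl(\bigvee_{i=1}^{k}S^m\bigr)$, apply Hilton's theorem with coherently nested Hall sets, identify the bonding maps with coordinate projections, and then rearrange the resulting product by weight. The only minor difference is in justifying the projection compatibility: rather than invoking loop-space naturality of the Hilton--Milnor splitting, the paper checks it elementarily at the level of Whitehead products, observing that $b_{k+1,k\#}(w\circ f)=[b_{k+1,k}]\circ w\circ f$ vanishes whenever the Hall word $w$ involves $a_{k+1}$ (since replacing a letter by $0$ kills an iterated Whitehead product) and equals $w\circ f$ otherwise.
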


The isomorphism in Theorem \ref{thm1} is not canonical but depends only an enumeration of the union of coherently chosen Hall bases $H_k$ on finite sets $A_k=\{a_1,\dots,a_k\}$. The upper bound $\frac{n-1}{m-1}$ on the index $j$ simply indicates when the homotopy groups of spheres might be non-trivial (if $j>\frac{n-1}{m-1}$, then $n<mj-j+1$ and $\pi_{n}(S^{mj-j+1})=0$). Low-dimensional cases of special interest are highlighted in Example \ref{specialcaseexample}.

When $n>m$, the group $\pi_n(\bbe_m)$ is potentially more complicated than $\check{\pi}_n(\bbe_m)$. However, there is a canonical homomorphism $\Psi_n:\pi_n(\bbe_m)\to \check{\pi}_n(\bbe_m)$ relating the two. Using the isomorphism in Theorem \ref{thm1} as a book-keeping device, we construct and distinguish new elements of $\pi_n(\bbe_m)$ in an effort to characterize the image of $\Psi_n$. We do not attempt to address whether or not $\Psi_n$ is injective. Our second theorem establishes the surjectivity of $\Psi_n$ in a certain range of dimensions.

\begin{theorem}\label{thm2}
If $m,n\geq 2$ and $n\leq 2m-1$, then the canonical homomorphism $\Psi_{n}:\pi_n(\bbe_m)\to \check{\pi}_{n}(\bbe_m)$ is a split epimorphism.
\end{theorem}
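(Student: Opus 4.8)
The plan is to produce an explicit homomorphic section $\sigma$ of $\Psi_n$; since an element of an inverse limit is determined by its coordinates, $\Psi_n\circ\sigma=\mathrm{id}$ will follow once $(r_k)_\ast\circ\sigma$ is the $k$th coordinate projection for each $k$, and then $\Psi_n$ is a split epimorphism. Write $W_k=\bigvee_{i=1}^k S^m$, let $r_k\colon\bbe_m\to W_k$ collapse all but the first $k$ sphere summands, and $\iota_i\colon S^m\to\bbe_m$ the $i$th inclusion; then $\bbe_m=\varprojlim_k W_k$ along the $r_k$, so $\check\pi_n(\bbe_m)=\varprojlim_k\pi_n(W_k)$ and $\Psi_n(f)=\big((r_k)_\ast f\big)_k$. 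By Hilton's theorem, as in the proof of Theorem~\ref{thm1}, $\pi_n(W_k)\cong\bigoplus_{w\in H_k}\pi_n(S^{m|w|-|w|+1})$ with the $w$-summand carried by $(h_w)_\ast$ for the basic-product map $h_w\colon S^{m|w|-|w|+1}\to W_k$ (so $h_{a_i}$ is the $i$th inclusion and $h_{[a_i,a_j]}=[h_{a_i},h_{a_j}]$), and by coherence of the $H_k$ the bonding maps kill exactly the summands involving $a_{k+1}$. As $2\le n\le 2m-1$, only weights $1$ and --- if $n=2m-1$ --- $2$ contribute (weight $\ell\ge 3$ gives $S^{m\ell-\ell+1}$, of dimension $\ge 3m-2>n$), and the weight-two group is $\pi_n(S^{2m-1})=\pi_{2m-1}(S^{2m-1})\cong\bbz$. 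In the limit, $\check\pi_n(\bbe_m)$ becomes $\prod_{i\in\bbn}\pi_n(S^m)$ when $n<2m-1$, and $\big(\prod_{i\in\bbn}\pi_n(S^m)\big)\oplus\big(\prod_{1\le i<j}\bbz\big)$ when $n=2m-1$, indexed by the weight-one generators $a_i$ and the weight-two basic products $[a_i,a_j]$.

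On the weight-one factor I would put $\sigma_1\big((c_i)_i\big)=\sw_{i\in\bbn}(\iota_i)_\ast(c_i)$, using the transfinite-concatenation operation $\sw$ on $\pi_n(\bbe_m)$: representatives of $(\iota_i)_\ast(c_i)$ have image in the shrinking basepoint-neighbourhoods $\bigcup_{k\ge i}S^m_k$, so the sum exists; it is additive and (since $n\ge 2$) independent of reordering, so $\sigma_1$ is a homomorphism; and as $r_k\circ\iota_i=h_{a_i}$ for $i\le k$ while $r_k\circ\iota_i\simeq \ast$ for $i>k$, and induced maps commute with $\sw$, the class $(r_k)_\ast\sigma_1\big((c_i)_i\big)=\sum_{i\le k}(h_{a_i})_\ast(c_i)$ has Hilton coordinates $(c_i)_{i\le k}$ in weight one and $0$ in higher weights. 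This settles the theorem when $n<2m-1$ (and, in dimension $m$, reproves the surjectivity of the Eda--Kawamura isomorphism).

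The substantive part is lifting, for $n=2m-1$, the factor $\prod_{i<j}\bbz$. Given $(c_{ij})_{i<j}$, the decisive move is to push the summation over $j$ \emph{inside} a Whitehead product: set $\beta_i=\sw_{j>i}c_{ij}\,\iota_j\in\pi_m(\bbe_m)$ (defined since $c_{ij}\,\iota_j$ has image in $S^m_j$), and then $\sigma_2\big((c_{ij})\big)=\sw_{i\in\bbn}[\iota_i,\beta_i]\in\pi_{2m-1}(\bbe_m)=\pi_n(\bbe_m)$. This is legitimate because $[\iota_i,\beta_i]$ is represented by a map into $\iota_i(S^m)\cup\im(\beta_i)\subseteq\bigcup_{k\ge i}S^m_k$, so these classes shrink as $i\to\infty$ --- whereas the naive ``$\sw_{i<j}c_{ij}[\iota_i,\iota_j]$'' is \emph{not} summable, since for fixed $i$ the maps $[\iota_i,\iota_j]$ all contain the non-shrinking sphere $S^m_i$. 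Bilinearity of the Whitehead product and the additivity/reordering-invariance of $\sw$ make $\sigma_2$ a homomorphism, and since $(r_k)_\ast$ commutes with $\sw$, carries $\iota_i$ to $h_{a_i}$ (or to $0$ for $i>k$) and $\beta_i$ to $\sum_{i<j\le k}c_{ij}\,h_{a_j}$, bilinearity reduces the whole thing, after applying $r_k$, to $(r_k)_\ast\sigma_2\big((c_{ij})\big)=\sum_{1\le i<j\le k}c_{ij}\,h_{[a_i,a_j]}$, with Hilton coordinates $(c_{ij})_{i<j\le k}$ in weight two and $0$ elsewhere. Then $\sigma=\sigma_1+\sigma_2$ (a sum in the abelian group $\pi_n(\bbe_m)$) satisfies $(r_k)_\ast\circ\sigma=\mathrm{pr}_k$ for all $k$, so $\Psi_n\circ\sigma=\mathrm{id}$.

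I would regard as routine the standard properties of earring spaces --- transfinite concatenation of a shrinking family is well defined on homotopy classes, additive, reordering-invariant for $n\ge 2$, and preserved by induced maps --- together with bilinearity of the Whitehead product and the compatibility of Hilton's decomposition with the $r_k$ (established in the proof of Theorem~\ref{thm1}). The one genuinely delicate point, and where I expect the main difficulty, is the reorganization in $\sigma_2$: recognizing that the weight-two classes cannot be concatenated directly, that moving the $j$-summation into the bracket produces maps with shrinking image (hence a well-defined $\sigma_2$), and --- crucially --- that this reorganization does not alter any $(r_k)_\ast$-image, which holds because $r_k$ turns the outer sum into a finite sum on which ordinary bilinearity applies. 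Note that $n\le 2m-1$ is used essentially: it makes weights $\ge 3$ invisible and makes the surviving weight-two group $\pi_{2m-1}(S^{2m-1})$ free, so every weight-two class is an integer multiple of $h_{[a_i,a_j]}$ and can be captured inside one Whitehead product $[\iota_i,\beta_i]$; for $n>2m-1$ this fails, which is what the rest of the paper must address.
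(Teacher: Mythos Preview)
Your proposal is correct and follows essentially the same route as the paper. The paper likewise splits into the cases $n<2m-1$ (where only $\mcw_1$ survives and the section is the infinite-sum map) and $n=2m-1$, and in the latter case constructs the section on $\mcw_2$ by exactly your formula $F_\alpha=\sum_{i=1}^{\infty}\bigl[\ell_i,\sum_{j>i}\epsilon_{i,j}\ell_j\bigr]$, verifying $\Psi_n([F_\alpha])=\alpha$ via the projections $b_k$ and checking additivity using the infinite-sum bilinearity identities; your identification of the ``push the $j$-sum inside the bracket'' trick as the crux, and of why the naive double sum fails to converge, matches the paper's emphasis precisely.
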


Theorem \ref{thm2} is proved in two cases. The stable range ($n<2m-1$) is proved in Corollary \ref{firstcasecor} where $\check{\pi}_{n}(\bbe_m)\cong \pi_{n}(S^m)^{\bbn}$. The case $n=2m-1$ is proved in Lemma \ref{edgelemma} where $\check{\pi}_{2m-1}(\bbe_m)\cong \pi_{2m-1}(S^{m})^{\bbn}\oplus \bbz^{\bbn}$. The image of the splitting homomorphism $\check{\pi}_{2m-1}(\bbe_m)\to \pi_{2m-1}(\bbe_m)$ constructed in Lemma \ref{edgelemma} meets the kernel of the homomorphism induced by the canonical embedding $\sigma:\bbe_m\to \prod_{j=1}^{\infty}S^m$ precisely in the subgroup $\mcw_{2n-1}(\bbe_n)$ that is computed in \cite{BrazasWHProducts}. Moreover, the author has suggested in \cite[Conjecture 1.3]{BrazasWHProducts} that $\mcw_{2n-1}(\bbe_n)$ is equal to $\ker(\sigma_{\#})$. It follows from the standard splitting of $\sigma_{\#}$ reviewed in Section \ref{sectionshrinkingwedge} that \cite[Conjecture 1.3]{BrazasWHProducts} is equivalent to conjecturing that $\Psi_{2m-1}:\pi_{2m-1}(\bbe_m)\to \check{\pi}_{2m-1}(\bbe_m)$ is an isomorphism for all $m\geq 2$.

Although the surjectivity of $\Psi_{n}$ remains unclear when $n>2m-1$, we construct a candidate for $\im(\Psi_n)$ in this range with our third theorem.

\begin{theorem}\label{thm3}
If $n>2m-1$, then $\pi_n(\bbe_m)$ contains a subgroup isomorphic to \[\prod_{\bbn}\pi_n(S^m)\oplus \bigoplus_{2\leq j\leq \frac{n-1}{m-1}}\left(\prod_{\bbn}\bigoplus_{\bbn}\pi_{n}(S^{mj-j+1})\right)\]
on which the canonical homomorphism $\Psi_{n}:\pi_n(\bbe_m)\to \check{\pi}_{n}(\bbe_m)$ is injective.
\end{theorem}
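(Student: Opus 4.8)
The plan is to realize the claimed subgroup as the image of an explicit homomorphism $\Phi$ assembled from the infinite-sum operation on $\pi_n(\bbe_m)$ and the Hilton--Milnor decompositions of the finite wedges, and then to prove that $\Psi_n$ is injective on it by reading off coordinates in $\check\pi_n(\bbe_m)$. As in Section~\ref{sectionshrinkingwedge}, write $\bbe_m=\ilim_k W_k$ with $W_k=\bigvee_{i=1}^k S^m$, let $r_k\colon\bbe_m\to W_k$ collapse the spheres indexed $>k$, so that $\check\pi_n(\bbe_m)=\ilim_k\pi_n(W_k)$ and $\Psi_n(x)=((r_k)_\#(x))_k$. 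With the coherent Hall bases $H_k$ on $A_k=\{a_1,\dots,a_k\}$ underlying Theorem~\ref{thm1}, set $H=\bigcup_kH_k$ and, for $w\in H$, write $\ell(w)$ for its weight, $d(w)=m\ell(w)-\ell(w)+1$ for the associated sphere dimension, $\mu(w)$ for the least index of a letter occurring in $w$, and $\phi_w\colon S^{d(w)}\to\bbe_m$ for the iterated Whitehead product of the sphere inclusions named by $w$. By Hilton--Milnor, $\bigoplus_{w\in H_k}\pi_n(S^{d(w)})\to\pi_n(W_k)$, $(\beta_w)_w\mapsto\sum_w(\phi_w)_\#\beta_w$, is an isomorphism (the sum being unambiguous as $n\ge2$ makes $\pi_n$ abelian), and coherence of the $H_k$ makes each bonding map $\pi_n(W_{k+1})\to\pi_n(W_k)$ act as the identity on the summands indexed by $w\in H_k$ and as $0$ on those indexed by $w\in H_{k+1}\setminus H_k$; hence $\check\pi_n(\bbe_m)=\prod_{w\in H,\ d(w)\le n}\pi_n(S^{d(w)})$, which is the group of Theorem~\ref{thm1}.

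To construct $\Phi$: for weight $j=1$ the only $w\in H$ with $\mu(w)=i$ is $a_i$, with $\phi_{a_i}=\iota_i\colon S^m\hookrightarrow\bbe_m$ the $i$-th sphere inclusion; for $(\alpha_i)_i\in\prod_\bbn\pi_n(S^m)$ the class $(\iota_i)_\#\alpha_i$ has a representative supported on the spheres indexed $\ge i$, so $\sum_i(\iota_i)_\#\alpha_i$ is a legitimate element of $\pi_n(\bbe_m)$ by the infinite-sum algebra. For fixed $j$ with $2\le j\le\frac{n-1}{m-1}$, the set $\{w\in H:\ell(w)=j,\ \mu(w)=i\}$ is countably infinite for each $i$, so once it is enumerated (compatibly with the enumeration of $H$ from Theorem~\ref{thm1}) a finitely supported family over it defines a genuine element $\xi_i=\sum_w(\phi_w)_\#\beta_w\in\pi_n(\bbe_m)$, every representative of which is supported on the spheres indexed $\ge i$ because each letter of each such $w$ has index $\ge\mu(w)=i$; hence $\sum_i\xi_i$ converges. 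Assembling over the finitely many admissible $j$ yields a homomorphism
\[\Phi\colon\ \textstyle\prod_\bbn\pi_n(S^m)\ \oplus\ \bigoplus_{2\le j\le\frac{n-1}{m-1}}\Big(\prod_\bbn\bigoplus_\bbn\pi_n(S^{mj-j+1})\Big)\ \longrightarrow\ \pi_n(\bbe_m),\]
whose values are finite sums over $j$ of convergent infinite sums.

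The remaining step is to show $\Psi_n\circ\Phi$ is injective, which yields both the injectivity of $\Phi$ and the injectivity of $\Psi_n$ on $\im(\Phi)$. Since $r_k$ is continuous it commutes with the infinite sums defining $\Phi(x)$; moreover $(r_k)_\#(\phi_w)_\#=(\phi_w)_\#$ for $w\in H_k$, while $(r_k)_\#(\phi_w)_\#=0$ whenever $w$ uses a letter of index $>k$, since by naturality $r_k\circ\phi_w$ is an iterated Whitehead product one of whose entries is nullhomotopic. Hence $(r_k)_\#(\Phi(x))$ is the element of $\pi_n(W_k)$ whose $w$-coordinate, for $w\in H_k$, is the coordinate $\beta_w(x)$ of $x$ attached to $w$ in the indexing above; passing to the inverse limit, $\Psi_n(\Phi(x))\in\prod_{w}\pi_n(S^{d(w)})$ has $w$-coordinate $\beta_w(x)$ for every $w\in H$ with $d(w)\le n$. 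Thus $\Psi_n(\Phi(x))=0$ forces every coordinate of $x$ to vanish, so $x=0$, which proves Theorem~\ref{thm3}. (The hypothesis $n>2m-1$ is not used here; it only marks the range where Theorem~\ref{thm2} does not apply, and in general the inner direct sums keep $\im(\Phi)$ a proper subgroup of $\check\pi_n(\bbe_m)$.)

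I expect the main obstacle to be the two ``commutation with infinite sums'' assertions above: that the infinite-sum operation on $\pi_n(\bbe_m)$ is well defined and is preserved by $(r_k)_\#$ --- the step where the shrinking-wedge topology must be handled with care --- and that under the collapses $r_k$ the Hilton--Milnor isomorphisms of the $W_k$ fit together coordinatewise exactly as coherence of the $H_k$ predicts, which is essentially the bookkeeping already carried out for Theorem~\ref{thm1}. A secondary point needing care is that $\xi_i$ (for $j\ge2$) admits a representative supported on the spheres indexed $\ge i$; this is immediate from the description of $\phi_w$ as an iterated Whitehead product of the named sphere inclusions, and it is precisely why only finite sums are available for a fixed value of $\mu$, hence why the $j\ge2$ factors carry an inner $\bigoplus_\bbn$ rather than $\prod_\bbn$.
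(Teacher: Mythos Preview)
Your proposal is correct and follows essentially the same route as the paper: index Hall words $w$ of weight $\geq 2$ by $\mu(w)$, form infinite sums whose $i$-th term is supported in $X_{\geq i}$, and verify injectivity by reading off Hilton--Milnor coordinates in each $\pi_n(W_k)$. The one organizational difference worth noting is that the paper treats the weight-$1$ summand separately (via the splitting of $\sigma_\#$ in Theorem~\ref{exacttheorem}) and then must argue that the higher-weight image $\Theta(\mcg_n(m))$ lies in $\ker(\sigma_\#)$---invoking an external lemma that $\ker(\sigma_\#)$ is closed under infinite sums---to conclude the two pieces form an internal direct sum in $\pi_n(\bbe_m)$; you instead build a single $\Phi$ on the full domain and check that $\Psi_n\circ\Phi$ is injective coordinatewise, which automatically gives both the injectivity of $\Phi$ and the direct-sum decomposition of its image, sidestepping the $\ker(\sigma_\#)$ argument entirely.
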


The author is grateful to Kazuhiro Kawamura for helpful conversations regarding the content of this paper.

\section{Preliminaries}\label{sectionprelim}

All topological spaces in this paper are assumed to be Hausdorff. Throughout, $\ui$ will denote the closed unit interval, $S^n$ will denote the unit $n$-sphere $\{(x_0,\dots,x_n)\in\bbr^{n+1}\mid \sum x_{i}^{2}=1\}$ with basepoint $(1,0,0,\dots,0)$. We only consider higher homotopy groups in this paper and so we generally assume $n\geq 2$. For a based topological space $(X,x_0)$, we write $\Omega^{n}(X,x_0)$ to denote the space of relative maps $(I^n,\partial I^n)\to (X,x_0)$ and $\pi_n(X,x_0)=\{[f]\mid f\in \Omega^n(X,x_0)\}$ to denote the $n$-th homotopy group. When the basepoint is clear from context, we may simplify this notation to $\Omega^n(X)$ and $\pi_n(X)$. Fixing a based homeomorphism $I^n/\partial I^n\cong S^n$, we identify $\Omega^{n}(X)$ with the space of based maps $S^n\to X$. For $f\in \Omega^{n}(X)$, let $-f$ denote the reverse map given by $-f(t_1,t_2,\dots,t_n)=f(1-t_1,t_2,\dots,t_n)$. We write $f_1+f_2+\dots+f_m$ for the usual first-coordinate concatenation of maps $f_i\in \Omega^n(X)$. If $m$ is a positive integer, then $m f$ denotes the $m$-fold sum $f+f+\cdots +f$, $0f$ denotes the constant map $c\in \Omega^n(X)$, and if $m$ is a negative integer, then $m f$ denotes the $(-m)$-fold sum $(-f)+(-f)+\cdots+(-f)$.

\subsection{Whitehead Products}

We use commutator bracket notation to denote Whitehead products both on the level of maps and homotopy classes. For an ordered pair of natural numbers $(p,q)\in\bbn^2$, we view $S^p\vee S^q$ naturally as a subspace of $S^p\times S^q$. Let $a_{p,q}:S^{p+q-1}\to S^p\vee S^q$ denote an attaching map of the unique $p+q$-cell in $S^p\times S^q$. If $f:S^p\to X$ and $g:S^q\to X$ are based maps, the composition $[ f,g]=(f\vee g)\circ a_{p,q}:S^{p+q-1}\to X$ is the \textit{Whitehead product} of the pair of maps $(f,g)$. This operation on maps descends to a well-defined bracket operation $[-,-]:\pi_p(X)\times \pi_q(X)\to \pi_{p+q-1}(X)$ on homotopy classes, which we also refer to as the \textit{Whitehead product}.

\begin{remark}\label{relations}
Whitehead products satisfy the following standard relations \cite{whiteheadjhc}. Let $p,q,r\geq 2$. For $\alpha,\alpha'\in \pi_p(X)$, $\beta,\beta'\in \pi_q(X)$, and $\gamma\in\pi_r(X)$, we have:
\begin{itemize}
\item (identity) $[\alpha,0]=0$ and $[0,\beta]=0$,
\item (bilinearity) $[\alpha+\alpha',\beta]=[\alpha,\beta]+[\alpha',\beta]$ and $[\alpha,\beta+\beta']=[\alpha,\beta]+[\alpha,\beta']$,
\item (graded symmetry) $[\alpha,\beta]=(-1)^{pq}[\beta,\alpha]$,
\item (graded Jacobi identity) \[(-1)^{pr}[[\alpha,\beta],\gamma]+(-1)^{pq}[[\beta,\gamma],\alpha]+(-1)^{rq}[[\gamma,\alpha],\beta]=0.\]
\end{itemize}
\end{remark}
We have need to apply some of the standard relations infinitely many times in a single line of algebraic reasoning. To do this it is necessary to know that whenever representing maps are ``small" we can apply these relations using ``small" homotopies. The following is an observation made in \cite{BrazasWHProducts}. 

\begin{proposition}
Suppose $X$ is a based space, $p,q\geq 2$, $f,f'\in \Omega^p(X)$, and $g,g'\in \Omega^q(X)$.
\begin{enumerate}
\item If $H$ is a homotopy from $f$ to $f'$ and $G$ is a homotopy from $g$ to $g'$, then $[f,g]\simeq [f',g']$ by a homotopy with image equal to $\im(H)\cup \im(G)$,
\item If $f$ or $g$ is constant, then $[f,g]$ is null-homotopic in $\im(f)\cup\im(g)$,
\item We have $[f+f',g]\simeq [f,g]+[f',g]$ by a homotopy with image $\im(f)\cup\im(f')\cup \im(g)$ and $[f,g+g']\simeq [f,g]+[f,g']$ by a homotopy with image $\im(f)\cup\im(g)\cup \im(g')$.
\end{enumerate}
\end{proposition}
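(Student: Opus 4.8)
The plan is to prove each of the three claims by exhibiting an explicit ``small'' homotopy, i.e.\ one whose image is controlled by the images of the given representing maps, and in each case the argument reduces to a careful but elementary inspection of the standard construction underlying the corresponding relation in Remark \ref{relations}. The key observation is that the Whitehead product $[f,g]=(f\vee g)\circ a_{p,q}$ is a \emph{composite}, so any homotopy between attaching-type maps or any homotopy of $f\vee g$ on $S^p\vee S^q$ gives, after composition with $a_{p,q}$, a homotopy of the same image (or smaller). Thus throughout I work on the level of maps and only ever compose maps $S^p\vee S^q\to X$ (or $S^{p+q-1}\to S^p\vee S^q$) with the data at hand; no point outside $\im(f)\cup\im(g)$ (etc.) is ever hit.

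For part (1), given homotopies $H\colon f\simeq f'$ and $G\colon g\simeq g'$, form the wedge homotopy $H\vee G\colon (S^p\vee S^q)\times I\to X$, which at each time $t$ is a based map $S^p\vee S^q\to X$ and has total image $\im(H)\cup\im(G)$; precomposing with $a_{p,q}\times \mathrm{id}_I$ yields a homotopy $[f,g]\simeq[f',g']$ whose image is exactly $\im(H)\cup\im(G)$ (it is contained in that set, and already $[f,g]$ and $[f',g']$ contribute nothing new since $\im(f)\subseteq\im(H)$ and $\im(g)\subseteq\im(G)$). For part (2), if, say, $g$ is constant, then $f\vee g$ factors through the retraction $S^p\vee S^q\to S^p$ collapsing the $S^q$ wedge summand; since $a_{p,q}$ composed with this retraction is a map $S^{p+q-1}\to S^p$ that is null-homotopic (it extends over the $(p+q)$-cell $D^{p}\times\{\ast\}$, or one simply cites that $\pi_{p+q-1}(S^p)$ receives this as a Whitehead product with the trivial class which vanishes by the identity relation), the composite $[f,g]$ is null-homotopic via a homotopy whose image lies in $\im(f)$; in general the image is $\im(f)\cup\im(g)$ and the homotopy can be taken constant equal to $x_0\in\im(f)\cap\im(g)$ after the first step, so it stays inside $\im(f)\cup\im(g)$. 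Part (3) is the one that requires genuine care: the bilinearity relation $[f+f',g]\simeq[f,g]+[f',g]$ is proved by the usual argument that the attaching map $a_{p,q}$ for the cell of $S^p\times S^q$, when $S^p$ is replaced by a suspension coordinate, is homotopic (rel the relevant structure) to a map realizing the concatenation, and crucially this homotopy of attaching maps takes place \emph{entirely in} $S^p\vee S^p\vee S^q$ (or the appropriate wedge), so after composing with the map $S^p\vee S^p\vee S^q\to X$ determined by $f,f',g$, the resulting homotopy in $X$ has image contained in $\im(f)\cup\im(f')\cup\im(g)$; the symmetric statement for the second variable is identical.

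The main obstacle is part (3): one must be explicit about the homotopy of attaching maps that witnesses bilinearity and verify that it genuinely lives in the finite wedge of spheres rather than using an abstract naturality argument that would lose control of the image. The cleanest route is to use the well-known description of $a_{p,q}$ via the reduced diagonal / James construction, or simply to invoke the fact (which is exactly what the cited observation from \cite{BrazasWHProducts} records) that the bilinearity homotopy for Whitehead products is ``combinatorial'' in the cell structure of products of spheres and hence supported on the relevant wedge; composing with the evident map to $X$ then transports it with the claimed image. I would present parts (1) and (2) in a couple of lines each and devote the bulk of the write-up to spelling out the wedge on which the bilinearity homotopy is supported, after which composing with $f,f',g$ finishes the proof.
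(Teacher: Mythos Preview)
The paper does not actually prove this proposition: it is stated there as ``an observation made in \cite{BrazasWHProducts}'' and cited without argument. So there is no in-paper proof to compare against.

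Your approach is the natural one and is almost certainly what appears in the cited reference: each identity among Whitehead products is first witnessed by a homotopy in the relevant universal wedge of spheres, and then pushed forward along the map determined by $f,g$ (or $f,f',g$) into $X$, which automatically controls the image. Parts (1) and (2) are correct as written. For part (3) your diagnosis of the obstacle is exactly right: one needs the bilinearity homotopy $[\iota_1+\iota_2,\iota_3]\simeq [\iota_1,\iota_3]+[\iota_2,\iota_3]$ to live in $S^p\vee S^p\vee S^q$, after which composing with $f\vee f'\vee g$ finishes the job. The one place to be careful is that ``the usual argument'' for bilinearity is often presented via the long exact sequence of a pair or via the co-$H$-structure on $S^p$, and some of those presentations obscure where the homotopy lives; the cell-structure argument you allude to (using that $(S^p\vee S^p)\times S^q$ has a CW structure whose attaching maps realize the relation) is the one that makes the support transparent.
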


A based map $f:X\to Y$ induces a homomorphism $f_{\#}:\pi_n(X)\to \pi_n(Y)$ and $[f]$ will denote the based homotopy class of $f$. Recall that ordinary composition $f\circ g$ of maps descends to composition $[f]\circ [g]$ of homotopy classes in the homotopy category.

\begin{remark}\label{inducedmapremark}
If $h:X\to Y$, $f:S^p\to X$, and $g:S^q\to X$ are based maps, then we have equality $h\circ [f,g]=[h\circ f,h\circ g]$ on the level of continuous maps. For homotopy classes $a=[f]$ and $b=[g]$, we have $h_{\#}([a,b])=[h_{\#}(a),h_{\#}(b)]$.
\end{remark}

\subsection{Infinite sums}

\begin{definition}
Let $(X,x_0)$ be a based space. We say that a sequence $\{f_j\}_{j\in \bbn}$ of based maps $f_j:W_j\to X$ \textit{converges to} $x_0$ if for every neighborhood $U$ of $x_0$, we have $\im(f_j)\subseteq U$ for all but finitely many $j\in \bbn$. The \textit{infinite sum} of a sequence $\{f_j\}_{j\in \bbn}$ of based maps $f_j:S^n\to X$ that converges to $x_0$ is the map $\sum_{j=1}^{\infty}f_j\in \Omega^n(X,x_0)$ defined as $f_j$ on $\left[\frac{j-1}{j},\frac{j}{j+1}\right]\times I^{n-1}$ for all $j\in\bbn$ and which maps $\{1\}\times I^{n-1}$ to $x_0$.
\end{definition}

Infinite sums of maps $S^n\to X$ are infinitely commutative up to homotopy \cite{EK00higher} and thus satisfy a variety of identities. We refer to \cite{BrazasWHProducts} for the proofs of the next four Propositions.

\begin{proposition}\label{linearity}\cite{BrazasWHProducts}
If $\{f_j\}_{j\in \bbn}$, $\{g_j\}_{j\in \bbn}$ are sequences of maps $S^n\to X$ that converge to $x_0\in X$ and $a,b\in\bbz$, then $\sum_{j=1}^{\infty}af_j+bg_j\simeq a\sum_{j=1}^{\infty}f_j+b\sum_{j=1}^{\infty}g_j$ by a homotopy with image in $\bigcup_{j\in\bbn}\im(f_j)\cup\im(g_j)$.
\end{proposition}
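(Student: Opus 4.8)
The plan is to reduce the statement to two ingredients: ordinary finite linearity of first-coordinate concatenation, and the infinite commutativity of infinite sums of maps $S^n\to X$ from \cite{EK00higher}, used in the precise form that a sequence $\{h_k\}_{k\in\bbn}$ converging to $x_0$ may be reindexed by any bijection of $\bbn$, and may be regrouped into consecutive (finite or infinite) blocks, up to a homotopy whose image is contained in $\bigcup_k\im(h_k)$. Granting this ``image-controlled commutativity'' machinery, the proposition is a bookkeeping exercise. As a stepping stone I would first record the one-variable case: for $a\in\bbz$ and $\{f_j\}$ converging to $x_0$, $\sum_{j=1}^{\infty}af_j\simeq a\sum_{j=1}^{\infty}f_j$ by a homotopy with image in $\bigcup_j\im(f_j)$. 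Indeed $a\sum_j f_j=(\sum_j f_j)+\cdots+(\sum_j f_j)$ (an $|a|$-fold concatenation, using $-f_j$ when $a<0$) and $\sum_j af_j=(f_1+\cdots+f_1)+(f_2+\cdots+f_2)+\cdots$ present the same underlying convergent family of maps in two different groupings/orderings, so the two are homotopic by the cited machinery; since $\im(-f_j)=\im(f_j)$, the image control is automatic.

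For the general statement I would first dispose of degenerate cases: if $a=0$ then the left side is $\sum_j bg_j$ (dropping the constant maps $0f_j$, whose image is $\{x_0\}$) and the right side is $b\sum_j g_j$, so the claim follows from the one-variable lemma; symmetrically for $b=0$. Replacing $(f_j,a)$ by $(-f_j,-a)$ and $(g_j,b)$ by $(-g_j,-b)$ as needed, it then suffices to treat $a,b\geq 1$. By the definition of the infinite sum together with associativity of infinite concatenation (an image-preserving reparametrization), $\sum_{j=1}^{\infty}(af_j+bg_j)$ is homotopic to the infinite sum $\sum_k h_k$ of
\[(h_k)=\big(\underbrace{f_1,\dots,f_1}_{a},\;\underbrace{g_1,\dots,g_1}_{b},\;\underbrace{f_2,\dots,f_2}_{a},\;\underbrace{g_2,\dots,g_2}_{b},\;\dots\big),\]
which still converges to $x_0$ since $\{f_j\},\{g_j\}$ do and each term is repeated at most $\max(a,b)$ times.

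Now partition the index set $\bbn$ of $(h_k)$ into the set of indices carrying $f$-copies and the set carrying $g$-copies; both are infinite, so there is a bijection $\rho$ of $\bbn$ under which $(h_{\rho(k)})$ has order type $\omega+\omega$, the first block being $(\underbrace{f_1,\dots,f_1}_{a},\underbrace{f_2,\dots,f_2}_{a},\dots)$ and the second $(\underbrace{g_1,\dots,g_1}_{b},\underbrace{g_2,\dots,g_2}_{b},\dots)$. Applying the reindexing and then regrouping into these two blocks gives, up to a homotopy supported in $\bigcup_j\im(f_j)\cup\im(g_j)$,
\[\sum_{j=1}^{\infty}(af_j+bg_j)\;\simeq\;\Big(\sum_{j=1}^{\infty}af_j\Big)+\Big(\sum_{j=1}^{\infty}bg_j\Big),\]
and the one-variable lemma converts the right side to $a\sum_j f_j+b\sum_j g_j$. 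Chaining the homotopies, all of which only reindex or regroup the same family of maps up to reversal, yields the result with image in $\bigcup_j\im(f_j)\cup\im(g_j)$.

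The main obstacle is the second ingredient stated \emph{with image control} and for orderings beyond $\omega$ (the split into $f$- and $g$-parts produces sums of order type $\omega+\omega$): one must know that the transfinite associativity/commutativity manipulations of \cite{EK00higher} can always be realized by homotopies whose image lies in the union of the images of the maps involved. This is precisely the strong ``infinite commutativity'' phenomenon for higher homotopy groups of shrinking wedges, and once it is available in this form the argument above is routine. An alternative that avoids the transfinite orderings is to prove the $a=b=1$ case directly via the two-dimensional shuffle of Eda–Kawamura — using $n\geq 2$ to lay out disjoint small cubes carrying the $f_j$ and the $g_j$ so that one reading of the resulting map is $\sum_j(f_j+g_j)$ while another is $\sum_j f_j$ ``stacked over'' $\sum_j g_j$, which collapses to $\sum_j f_j+\sum_j g_j$ — and then bootstrap to general $a,b$ by finite linearity; here the obstacle becomes the same image-controlled shuffle lemma.
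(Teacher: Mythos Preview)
The paper does not actually prove this proposition: immediately before Proposition~\ref{linearity} it states ``We refer to \cite{BrazasWHProducts} for the proofs of the next four Propositions,'' and no argument is given here. So there is no in-paper proof to compare your proposal against.

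That said, your sketch is a sensible reconstruction of how such a proof goes, and you have correctly isolated the one nontrivial ingredient: an \emph{image-controlled} version of the Eda--Kawamura infinite commutativity, strong enough to permit arbitrary reindexings and (finite or transfinite) regroupings of a null-convergent family $\{h_k\}$ by homotopies supported in $\bigcup_k\im(h_k)$. Once that is in hand, your reduction (degenerate cases, sign normalization, unpacking $\sum_j(af_j+bg_j)$ into a single null sequence, separating the $f$-copies from the $g$-copies, then invoking the one-variable lemma) is exactly the intended bookkeeping. One phrasing to tighten: a bijection $\rho$ of $\bbn$ cannot literally produce ``order type $\omega+\omega$''; what you want is that the $\omega$-sum $\sum_k h_k$ is homotopic (with image control) to the first-coordinate concatenation $\big(\sum_j af_j\big)+\big(\sum_j bg_j\big)$, which is itself a transfinite sum over $\omega+\omega$. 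That step is precisely the regrouping part of the machinery you cite, not a reindexing of $\bbn$.

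Your alternative route via the two-dimensional shuffle (laying out small disjoint $n$-cubes so one reading gives $\sum_j(f_j+g_j)$ and another gives $\sum_j f_j+\sum_j g_j$) is in fact closer in spirit to the arguments in \cite{EK00higher} and \cite{BrazasWHProducts}, and it makes the image control transparent since the homotopies are just isotopies of domain cubes. Either way, the substance lives in the cited reference, not in the present paper.
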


\begin{definition}
Let $\{f_j\}_{j\in \bbn}$ and $\{g_j\}_{j\in \bbn}$ be sequences of maps $S^n\to X$ that converge to $x_0\in X$. We say that $\{f_j\}_{j\in \bbn}$ and $\{g_j\}_{j\in \bbn}$ are \textit{sequentially homotopic} and we write $\{f_j\}_{j\in \bbn}\simeq\{g_j\}_{j\in \bbn}$ if there exists a sequence $\{H_j\}_{j\in \bbn}$ where $H_j:S^n\times\ui\to X$ is a based homotopy from $f_j$ to $g_j$ (for each $j\in\bbn$) such that $\{H_j\}_{j\in\bbn}$ converges to $x_0$.
\end{definition}

\begin{proposition}\cite{BrazasWHProducts}\label{sumhomotopyprop}
If $\{H_j\}_{j\in\bbn}$ is a sequential homotopy between sequences $\{f_j\}_{j\in \bbn}$ and $\{g_j\}_{j\in \bbn}$ of maps $f_j,g_j\in \Omega^n(X)$ that converge to $x_0$, then $\sum_{j=1}^{\infty}f_j\simeq \sum_{j=1}^{\infty}g_j$ by a homotopy with image $\bigcup_{j\in\bbn}\im(H_j)$.
\end{proposition}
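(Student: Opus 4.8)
The plan is to construct the desired homotopy $H\colon I^n\times I\to X$ by the same block-by-block gluing procedure that defines the infinite sum itself, applied now to the homotopies $H_j$ instead of to the maps $f_j$. For each $j\in\bbn$ fix the homeomorphism $h_j\colon\left[\frac{j-1}{j},\frac{j}{j+1}\right]\times I^{n-1}\to I^n$ (affine in the first coordinate) used to define $\sum_{j=1}^{\infty}f_j$ and $\sum_{j=1}^{\infty}g_j$, and regard each based homotopy $H_j$ as a map $I^n\times I\to X$ collapsing $\partial I^n\times I$ to $x_0$. I would then set $H(t,s)=H_j\bigl(h_j(t),s\bigr)$ for $t$ in the $j$-th block $\left[\frac{j-1}{j},\frac{j}{j+1}\right]\times I^{n-1}$, and $H\bigl(\{1\}\times I^{n-1}\times I\bigr)=\{x_0\}$. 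Since $H_j(\cdot,0)=f_j$ and $H_j(\cdot,1)=g_j$, the slices $H(\cdot,0)$ and $H(\cdot,1)$ are by construction exactly $\sum_{j=1}^{\infty}f_j$ and $\sum_{j=1}^{\infty}g_j$.

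The work is entirely in verifying that $H$ is continuous. On each closed block $\left[\frac{j-1}{j},\frac{j}{j+1}\right]\times I^{n-1}\times I$ the map $H$ is continuous by construction, and two adjacent blocks share only faces of the form $\{c\}\times I^{n-1}\times I$ or $\left[\frac{j-1}{j},\frac{j}{j+1}\right]\times\partial I^{n-1}\times I$ on which both block definitions return $x_0$ (because $h_j$ carries these faces into $\partial I^n$ and each $H_j$ collapses $\partial I^n\times I$ to $x_0$); hence $H$ is continuous on the union of all the blocks by the pasting lemma. Continuity along the remaining slab $\{1\}\times I^{n-1}\times I$ is where the hypothesis enters: given a neighborhood $U$ of $x_0$, the convergence of $\{H_j\}_{j\in\bbn}$ to $x_0$ yields $N$ with $\im(H_j)\subseteq U$ for all $j\geq N$, so the preimage of $U$ contains the relatively open neighborhood $\bigl(\frac{N-1}{N},1\bigr]\times I^{n-1}\times I$ of that slab; thus $H$ is continuous there as well. (Note that convergence of $\{H_j\}$ already forces convergence of $\{f_j\}$ and $\{g_j\}$, so the two infinite sums in the statement are themselves well-defined.)

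It remains to check that $H$ is a based homotopy with the stated image. On $\partial I^n\times I$ every block contributes only $x_0$ and the slab is sent to $x_0$, so $H\bigl(\partial I^n\times I\bigr)=\{x_0\}$; hence $H$ descends to a based homotopy $S^n\times I\to X$ witnessing $\sum_{j=1}^{\infty}f_j\simeq\sum_{j=1}^{\infty}g_j$. Its image is $\bigcup_{j\in\bbn}\im(H_j)\cup\{x_0\}$, and since $x_0=H_j(\ast,s)\in\im(H_j)$ for every $j$, this equals $\bigcup_{j\in\bbn}\im(H_j)$. The only genuine obstacle in the argument is the boundary-continuity step on the slab $\{1\}\times I^{n-1}\times I$, which is precisely the point at which one needs the $H_j$ themselves — not merely the $f_j$ and $g_j$ — to converge to $x_0$; everything else is the routine pasting argument already used to see that an infinite sum of maps is continuous.
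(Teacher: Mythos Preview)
Your argument is correct and is precisely the natural construction one would expect: assemble the homotopy block-by-block in the first coordinate exactly as the infinite sum is assembled, check compatibility on the shared faces via the based condition on each $H_j$, and use convergence of $\{H_j\}$ to $x_0$ to obtain continuity along $\{1\}\times I^{n-1}\times I$. Note that the present paper does not actually give a proof of this proposition; it is simply quoted from \cite{BrazasWHProducts}, so there is no in-paper argument to compare against. Your write-up would serve as a complete proof, and the approach is the standard one (and almost certainly the one appearing in \cite{BrazasWHProducts}).
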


We also require the following relations for combinations of infinite sums and Whitehead products.

\begin{proposition}\label{wh1prop}\cite{BrazasWHProducts}
Let $(X,x_0)$ be a based space and $p,q\geq 2$. Suppose $\{f_j\}_{j\in \bbn}$, $\{g_j\}_{j\in \bbn}$ are sequences of maps $S^p\to X$ that converge to $x_0$ and $\{h_j\}_{j\in \bbn}$, $\{k_j\}_{j\in \bbn}$ are sequences of maps $S^q\to X$ that converge to $x_0$. Then $\{[f_j,h_j]\}_{j\in\bbn}$ and $\{[g_j,k_j]\}_{j\in\bbn}$ are sequences of maps $S^{p+q-1}\to X$ that converge to $x_0$. Moreover, if $\{f_j\}_{j\in \bbn}\simeq\{g_j\}_{j\in \bbn}$ and $\{h_j\}_{j\in \bbn}\simeq\{k_j\}_{j\in \bbn}$, then $\{[f_j,h_j]\}_{j\in\bbn}\simeq\{[g_j,k_j]\}_{j\in\bbn}$ and $\sum_{j=1}^{\infty}[f_j,h_j]\simeq \sum_{j=1}^{\infty}[g_j,k_j]$.
\end{proposition}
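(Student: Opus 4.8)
The plan is to reduce the statement to the ``small-homotopy'' versions of the Whitehead-product relations recorded above, together with Proposition \ref{sumhomotopyprop}; the whole argument is essentially careful bookkeeping about supports, and I would carry it out in three steps.

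First, I would establish convergence of the bracket sequences. Fix a neighborhood $U$ of $x_0$. Since $\{f_j\}_{j\in\bbn}$ and $\{h_j\}_{j\in\bbn}$ converge to $x_0$, there is an $N$ with $\im(f_j)\subseteq U$ and $\im(h_j)\subseteq U$ for all $j\geq N$. By definition $[f_j,h_j]=(f_j\vee h_j)\circ a_{p,q}$, so $\im([f_j,h_j])\subseteq \im(f_j)\cup\im(h_j)\subseteq U$ for all $j\geq N$; hence $\{[f_j,h_j]\}_{j\in\bbn}$ converges to $x_0$, and the same reasoning applies to $\{[g_j,k_j]\}_{j\in\bbn}$. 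In particular the infinite sums $\sum_{j=1}^{\infty}[f_j,h_j]$ and $\sum_{j=1}^{\infty}[g_j,k_j]$ are defined.

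Second, I would produce the sequential homotopy. The hypotheses $\{f_j\}\simeq\{g_j\}$ and $\{h_j\}\simeq\{k_j\}$ supply sequences of based homotopies $\{H_j\}_{j\in\bbn}$ (from $f_j$ to $g_j$) and $\{G_j\}_{j\in\bbn}$ (from $h_j$ to $k_j$), each converging to $x_0$. Applying part (1) of the preceding small-homotopy Proposition to each index $j$ yields a based homotopy $K_j$ from $[f_j,h_j]$ to $[g_j,k_j]$ with $\im(K_j)=\im(H_j)\cup\im(G_j)$. Given a neighborhood $U$ of $x_0$, choosing $N$ with $\im(H_j)\subseteq U$ and $\im(G_j)\subseteq U$ for $j\geq N$ forces $\im(K_j)\subseteq U$ for $j\geq N$, so $\{K_j\}_{j\in\bbn}$ converges to $x_0$. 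This is exactly the statement $\{[f_j,h_j]\}_{j\in\bbn}\simeq\{[g_j,k_j]\}_{j\in\bbn}$. Finally, feeding the sequential homotopy $\{K_j\}$ into Proposition \ref{sumhomotopyprop} gives $\sum_{j=1}^{\infty}[f_j,h_j]\simeq\sum_{j=1}^{\infty}[g_j,k_j]$, indeed by a homotopy with image contained in $\bigcup_{j\in\bbn}\bigl(\im(H_j)\cup\im(G_j)\bigr)$.

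The only point that requires genuine care --- and the reason the ``small'' refinements of the classical Whitehead relations were isolated in the first place --- is that the replacement homotopy $K_j$ must be supported inside $\im(H_j)\cup\im(G_j)$ with no enlargement of support, so that convergence of $\{H_j\}$ and $\{G_j\}$ propagates to convergence of $\{K_j\}$. Once that support control is granted, no step is delicate; the remaining work is the routine verification already packaged in the earlier Propositions.
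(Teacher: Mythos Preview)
Your proof is correct. Note, however, that the paper does not supply its own proof of this proposition: it is quoted from \cite{BrazasWHProducts}, so there is no in-paper argument to compare against. That said, your approach is exactly the intended one---use the support control in part (1) of the preceding ``small-homotopy'' Proposition to build the termwise homotopies $K_j$ with $\im(K_j)=\im(H_j)\cup\im(G_j)$, verify that this inherits convergence to $x_0$, and then invoke Proposition \ref{sumhomotopyprop}. There is nothing to add.
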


\begin{proposition}\label{wh2prop}\cite{BrazasWHProducts}
Let $(X,x_0)$ be a based space and $p,q\geq 2$. Suppose $\{f_j\}_{j\in \bbn}$, $\{g_j\}_{j\in \bbn}$ are sequences of maps $S^p\to X$ that converge to $x_0$ and $\{h_j\}_{j\in \bbn}$, $\{k_j\}_{j\in \bbn}$ are sequences of maps $S^q\to X$ that converge to $x_0$. Then
\begin{enumerate}
\item $\sum_{j=1}^{\infty}[f_j+g_j,h_j]\simeq \sum_{j=1}^{\infty}[f_j,h_j]+\sum_{j=1}^{\infty}[g_j,h_j]$ by a homotopy with image in $\bigcup_{j\in\bbn}\im(f_j)\cup\im(g_j)\cup\im(h_j)$,
\item $\sum_{j=1}^{\infty}[f_j,h_j+k_j]\simeq \sum_{j=1}^{\infty}[f_j,h_j]+\sum_{j=1}^{\infty}[f_j,k_j]$ by a homotopy with image in $\bigcup_{j\in\bbn}\im(f_j)\cup\im(h_j)\cup\im(k_j)$.
\end{enumerate}
\end{proposition}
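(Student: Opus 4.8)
The statement to prove is Proposition~\ref{wh2prop}, the two distributivity identities combining infinite sums with Whitehead products. Here is how I would proceed.

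\medskip

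\textbf{Overall strategy.} The plan is to reduce each identity to the finite bilinearity relation for Whitehead products (Remark~\ref{relations}) applied coordinate-by-coordinate, together with the convergence/homotopy machinery already established in Propositions~\ref{linearity}, \ref{sumhomotopyprop}, and~\ref{wh1prop}. The crucial point is not the algebra — which is formally identical to the finite case — but the \emph{locality} of all the homotopies involved: one must check that each homotopy used can be chosen with image contained in the prescribed union $\bigcup_{j}\im(f_j)\cup\im(g_j)\cup\im(h_j)$, so that, running these homotopies simultaneously over all $j\in\bbn$, the resulting sequence of homotopies still converges to $x_0$ and Proposition~\ref{sumhomotopyprop} applies to glue them into a single homotopy of the infinite sums.

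\medskip

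\textbf{Step 1: Convergence of the relevant sequences.} First I would note, by Proposition~\ref{wh1prop}, that $\{[f_j+g_j,h_j]\}_{j\in\bbn}$, $\{[f_j,h_j]\}_{j\in\bbn}$, and $\{[g_j,h_j]\}_{j\in\bbn}$ are all sequences of maps $S^{p+q-1}\to X$ converging to $x_0$ (using that $\{f_j+g_j\}$ converges to $x_0$ because $\im(f_j+g_j)=\im(f_j)\cup\im(g_j)$); hence all three infinite sums in part (1) are defined. The analogous remark handles part (2).

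\medskip

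\textbf{Step 2: Termwise local homotopy.} For each fixed $j$, the finite bilinearity relation from the Proposition following Remark~\ref{relations} (item (3)) gives a homotopy $K_j$ from $[f_j+g_j,h_j]$ to $[f_j,h_j]+[g_j,h_j]$ with $\im(K_j)\subseteq \im(f_j)\cup\im(g_j)\cup\im(h_j)$. Since $\{f_j\}$, $\{g_j\}$, $\{h_j\}$ all converge to $x_0$, the sequence $\{K_j\}$ converges to $x_0$: for any neighborhood $U$ of $x_0$, all but finitely many of $\im(f_j)$, $\im(g_j)$, $\im(h_j)$ lie in $U$, so all but finitely many $\im(K_j)$ do too. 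Therefore $\{[f_j+g_j,h_j]\}_{j\in\bbn}$ and $\{[f_j,h_j]+[g_j,h_j]\}_{j\in\bbn}$ are sequentially homotopic, and Proposition~\ref{sumhomotopyprop} yields
\[
\sum_{j=1}^{\infty}[f_j+g_j,h_j]\simeq \sum_{j=1}^{\infty}\big([f_j,h_j]+[g_j,h_j]\big)
\]
by a homotopy with image in $\bigcup_{j\in\bbn}\im(K_j)\subseteq \bigcup_{j\in\bbn}\im(f_j)\cup\im(g_j)\cup\im(h_j)$.

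\medskip

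\textbf{Step 3: Splitting the sum of termwise sums.} It remains to identify $\sum_{j=1}^{\infty}\big([f_j,h_j]+[g_j,h_j]\big)$ with $\sum_{j=1}^{\infty}[f_j,h_j]+\sum_{j=1}^{\infty}[g_j,h_j]$; this is exactly the case $a=b=1$ of Proposition~\ref{linearity} applied to the sequences $\{[f_j,h_j]\}$ and $\{[g_j,h_j]\}$, giving a homotopy with image in $\bigcup_{j\in\bbn}\im([f_j,h_j])\cup\im([g_j,h_j])\subseteq \bigcup_{j\in\bbn}\im(f_j)\cup\im(g_j)\cup\im(h_j)$ (since $\im[f_j,h_j]\subseteq\im(f_j)\cup\im(h_j)$ by construction of the Whitehead product). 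Concatenating the homotopies from Steps 2 and 3 proves part (1) with the asserted image bound. Part (2) is proved identically, using bilinearity in the second variable and interchanging the roles accordingly.

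\medskip

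\textbf{Main obstacle.} There is no deep obstacle; the entire content is bookkeeping with images of homotopies. The one point requiring care is ensuring that the termwise homotopy $K_j$ produced by the finite distributivity relation genuinely has image in $\im(f_j)\cup\im(g_j)\cup\im(h_j)$ rather than something larger — but this is precisely item (3) of the Proposition stated after Remark~\ref{relations}, so it may be invoked directly. After that, convergence of $\{K_j\}$ and the glue-into-one-homotopy step via Proposition~\ref{sumhomotopyprop} are routine.
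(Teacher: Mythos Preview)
The paper does not actually supply its own proof of this proposition; it is stated with a citation to \cite{BrazasWHProducts} and the proof is deferred to that reference. Your argument is correct and is the natural one: apply the finite bilinearity homotopy of the unnamed Proposition (item (3)) termwise to obtain a sequential homotopy, invoke Proposition~\ref{sumhomotopyprop} to pass to the infinite sum, and then use Proposition~\ref{linearity} to split the resulting sum, tracking image bounds throughout. This is almost certainly the argument in the cited source as well, since the surrounding propositions you invoke are drawn from the same reference and are tailored precisely to this kind of reduction.
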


\subsection{Shrinking wedges and their \v{C}ech homotopy groups}\label{sectionshrinkingwedge}

\begin{definition}
The \textit{shrinking wedge} of countable set $\{X_j\}_{j\in J}$ of based spaces is the space $\sw_{j\in J}X_j$ whose underlying set is the usual one-point union $\bigvee_{j\in J}X_j$ with canonical basepoint denoted $b_0$. A set $U$ is open in $\sw_{j\in J}X_j$ if
\begin{itemize}
\item $U\cap X_j$ is open in $X_j$ for all $j\in J$,
\item and whenever $b_0\in U$, we have $X_j\subseteq U$ for all but finitely many $j\in J$.
\end{itemize}
We refer to the special case $\bbe_m=\sw_{j\in\bbn}S^m$ as the \textit{$m$-dimensional earring space}.
\end{definition}

We note two other useful ways to construct a shrinking wedge $X=\sw_{j\in J}X_j$. First, $X$ canonically embeds in the direct product $\prod_{j\in J}X_j$. We let $\sigma:X\to \prod_{j\in J}X_j$ denote the embedding and often identify $X$ with its image under $\sigma$ so that $\sigma$ is an inclusion map. Second, is the construction of $\sw_{j\in J}X_j$ as an inverse limit. Let $X_{\leq k}=\bigvee_{j=1}^{k}X_j$ be the wedge of the first $k$-many spaces and $b_{k+1,k}:X_{\leq k+1}\to X_{\leq k}$ denote the map that collapses $X_{k+1}$ to the wedgepoint. The projection maps $b_k:X\to X_{\leq k}$ collapsing the sub-shrinking wedge $X_{\geq k+1}=\sw_{j>k}X_j$ to the basepoint induce a homeomorphism $X\to \varprojlim_{k}(X_{\leq k},b_{k+1,k})$.

\begin{theorem}\label{exacttheorem}\cite{Kawamurasuspensions}
Let $X=\sw_{j\in\bbn}X_j$ be the shrinking wedge of a sequence of path-connected, based spaces $\{X_j\}_{j\in\bbn}$. For all $n\geq 2$, the long exact sequence of the pair $(\prod_{j\in\bbn}X_j,X)$ gives short exact sequences that split naturally:
\[\xymatrix{0 \ar[r] &  \pi_{n+1}(\prod_{j\in\bbn}X_j,X) \ar[r]^-{\partial} &  \pi_n(X) \ar[r]^-{\sigma_{\#}} & \pi_n(\prod_{j\in\bbn}X_j) \ar[r] & 0.}\]
Moreover, the splitting homomorphism $\zeta:\pi_n(\prod_{j\in\bbn}X_j)\to \pi_n(X)$ is given by the infinite sum operation $\zeta([(f_j)_{j\in\bbn}])=\left[\sum_{j=1}^{\infty}f_j\right]$ where $(f_j)_{j\in\bbn}$ denotes the map $S^n\to \prod_{j\in\bbn}X_j$, which is $f_j$ in the $j$-th coordinate. 
\end{theorem}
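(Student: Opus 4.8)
Write $P=\prod_{j\in\bbn}X_j$. The plan is to construct the splitting $\zeta$ directly as the infinite-sum operation, check by hand that $\sigma_{\#}\circ\zeta$ is the identity at every level $\geq 2$, and then read the short exact sequence off the standard long exact sequence of homotopy groups of the pair $(P,X)$. Since $\sigma\colon X\to P$ is a homeomorphism onto its image carrying $b_0$ to the product basepoint, this pair and its long exact sequence are legitimate, and they are natural in the sequence $\{X_j\}_{j\in\bbn}$ because both $\sw_{j\in\bbn}(-)$ and $\prod_{j\in\bbn}(-)$ are functors and $\sigma$ is a natural transformation between them.

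First I would verify that $\zeta$ is well defined. A based map $(f_j)_{j\in\bbn}\colon S^n\to P$ is the same data as a sequence of based maps $f_j\colon S^n\to X_j$, each of which we regard as a map into $X=\sw_{j\in\bbn}X_j$ via the canonical inclusion $X_j\hookrightarrow X$. The key observation is that such a sequence automatically converges to $b_0$: every neighborhood of $b_0$ in $X$ contains all but finitely many summands $X_j$, hence contains $\im(f_j)$ for all but finitely many $j$. Thus $\sum_{j=1}^{\infty}f_j\in\Omega^n(X)$ is defined, and it is continuous at $\{1\}\times I^{n-1}$ for the same reason. If $(f_j)_{j\in\bbn}\simeq(g_j)_{j\in\bbn}$ in $P$, the coordinate homotopies likewise converge to $b_0$, so they form a sequential homotopy and Proposition \ref{sumhomotopyprop} gives $\sum_j f_j\simeq\sum_j g_j$; hence $\zeta$ descends to homotopy classes. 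It is a homomorphism because $\zeta([(f_j)]+[(g_j)])=[\sum_j(f_j+g_j)]$ and $\zeta([(f_j)])+\zeta([(g_j)])=[\sum_j f_j+\sum_j g_j]$ agree by Proposition \ref{linearity} with $a=b=1$.

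Next I would show $\sigma_{\#}\circ\zeta=\mathrm{id}$. Homotopy groups commute with products, so $\pi_n(P)\cong\prod_j\pi_n(X_j)$ and it is enough to compare $k$-th coordinates, i.e.\ to compute $(p_k)_{\#}\sigma_{\#}\zeta([(f_j)])$ for each projection $p_k\colon P\to X_k$. The composite $p_k\circ\sigma\colon X\to X_k$ is the retraction that is the identity on $X_k$ and collapses every other wedge summand to the basepoint, so applying it to $\sum_j f_j$ produces the infinite sum whose $k$-th block is $f_k$ and all of whose other blocks are constant; an elementary reparametrization homotopy rel $\partial I^n$ identifies this with $f_k$. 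Hence $(p_k)_{\#}\sigma_{\#}\zeta([(f_j)])=[f_k]=(p_k)_{\#}[(f_j)]$, so $\sigma_{\#}\zeta=\mathrm{id}$ and $\sigma_{\#}\colon\pi_k(X)\to\pi_k(P)$ is a split epimorphism for every $k\geq 2$. The splitting is natural, since for based maps $\phi_j\colon X_j\to Y_j$ one has $(\sw_j\phi_j)\circ\sum_j f_j=\sum_j(\phi_j\circ f_j)$ on the nose, which says exactly that $\zeta$ commutes with the maps induced by $\sw_j\phi_j$ and $\prod_j\phi_j$. Feeding this into the long exact sequence of the pair,
\[\cdots\to\pi_{n+1}(P)\to\pi_{n+1}(P,X)\xrightarrow{\ \partial\ }\pi_n(X)\xrightarrow{\ \sigma_{\#}\ }\pi_n(P)\to\pi_n(P,X)\to\cdots,\]
surjectivity of $\sigma_{\#}$ at level $n$ makes $\pi_n(P)\to\pi_n(P,X)$ vanish and surjectivity of $\sigma_{\#}$ at level $n+1$ makes $\pi_{n+1}(P)\to\pi_{n+1}(P,X)$ vanish, so $\partial$ is injective; exactness at $\pi_n(X)$ then yields the short exact sequence of the statement, split naturally by $\zeta$.

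I expect the only genuine obstacles to be topological rather than algebraic: confirming that $\sigma$ is an embedding (so the pair and its long exact sequence are available), that $\sum_j f_j$ is continuous where the blocks accumulate, and that coordinatewise homotopies into $P$ assemble into a single honest homotopy. None of these is deep — each follows once one uses that a basic neighborhood of $b_0$ absorbs all but finitely many summands — and all of the needed infinite-sum algebra is already packaged in Propositions \ref{linearity} and \ref{sumhomotopyprop}.
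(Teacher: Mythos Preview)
The paper does not give its own proof of this statement; it is quoted from \cite{Kawamurasuspensions} as an established result, so there is no paper-proof to compare against. Your argument is correct and essentially reconstructs the standard proof: you build $\zeta$ via the infinite-sum operation (well-definedness and continuity coming from the shrinking-wedge topology, descent to homotopy classes and additivity coming from Propositions~\ref{linearity} and~\ref{sumhomotopyprop}), verify $\sigma_{\#}\circ\zeta=\mathrm{id}$ coordinatewise, and then use split surjectivity of $\sigma_{\#}$ in every degree $\geq 2$ to break the long exact sequence of the pair into the asserted split short exact sequences. The one point worth stating explicitly is that your argument for $\sigma_{\#}\zeta=\mathrm{id}$ and for $\zeta$ being a homomorphism works uniformly for all $k\geq 2$, so in particular surjectivity of $\sigma_{\#}$ at level $n+1$ (needed to make $\partial$ injective at level $n$) is available; you invoke this, but it is good to flag that the range $n\geq 2$ is what makes both levels $n$ and $n+1$ fall within the scope of your construction.
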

Theorem \ref{exacttheorem} implies that $\pi_n(\sw_{j\in\bbn}X_j)$ is isomorphic to: \[\pi_{n+1}\left(\prod_{j\in\bbn}X_j,X\right)\oplus\pi_n\left(\prod_{j\in\bbn}X_j\right)\cong \pi_{n+1}\left(\prod_{j\in\bbn}X_j,X\right)\oplus\prod_{j\in\bbn}\pi_n(X_j)\]
Hence, to characterize the group $\pi_n(X)$, $n\geq 2$ in terms of the homotopy groups of $X_j$, it suffices to characterize the relative homotopy group $\pi_{n+1}(\prod_{j\in\bbn}X_j,X)$. A common tactic for distinguishing elements of such groups comes from shape theory \cite{MS82}.

While the \v{C}ech homotopy groups (sometimes called the shape homotopy groups) of any based space are well-defined, we have a simple description in the following case: if $X=\sw_{j\in\bbn}X_j$ is a shrinking wedge of path-connected, compact, based polyhedra, then the inverse limit group $\check{\pi}_n(X)=\varprojlim_{k}(\pi_n(X_{\leq k}),b_{k+1,k\#})$ is the \textit{$n$-th \v{C}ech homotopy group} of $X$ (this presentation of $\check{\pi}_n(X)$ is justified by \cite[Ch I, \textsection 5.4, Theorem 13]{MS82}). The maps $b_k:X\to X_{\leq k}$ induce a canonical homomorphism $\Psi_n:\pi_n(X)\to \check{\pi}_n(X)$, $\Psi_n([f])=([b_k\circ f])_{k\in\bbn}$, which we call the \textit{$n$-th \v{C}ech homotopy map}.

\begin{lemma}\label{exactlemma}
For any sequence of path-connected, compact, based polyhedra $\{X_j\}_{j\in\bbn}$ and $n\geq 2$, there is an exact sequence
\[\xymatrix{0 \ar[r] &  \check{\pi}_{n+1}\left(\prod_{j\in\bbn}X_j,X\right) \ar[r]^-{\check{\partial}} &  \check{\pi}_n(X) \ar[r]^-{\check{\sigma}} & \pi_n(\prod_{j\in\bbn}X_j) \ar[r] & 0}\]
that splits naturally.
\end{lemma}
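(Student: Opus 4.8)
The plan is to obtain the sequence by applying the inverse limit functor to the naturally split short exact sequences of Theorem~\ref{exacttheorem}. First I would make the relative \v{C}ech group explicit. Write $\sigma_k\colon X_{\leq k}\hookrightarrow\prod_{j=1}^{k}X_j$ for the inclusion of the finite wedge into the finite product and $p_{k+1,k}\colon\prod_{j=1}^{k+1}X_j\to\prod_{j=1}^{k}X_j$ for the coordinate projection; one checks at once that $X_{\leq k}$ is a subpolyhedron of $\prod_{j=1}^{k}X_j$ and that $p_{k+1,k}$ restricts to the collapse $b_{k+1,k}\colon X_{\leq k+1}\to X_{\leq k}$. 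Thus $\{(\prod_{j=1}^{k}X_j,\,X_{\leq k})\}_{k\in\bbn}$ is an inverse sequence of compact polyhedral pairs whose inverse limit is the pair $(\prod_{j\in\bbn}X_j,X)$; here one uses the homeomorphism $X\cong\varprojlim_k(X_{\leq k},b_{k+1,k})$ recorded in Section~\ref{sectionshrinkingwedge}, the analogous homeomorphism $\prod_{j\in\bbn}X_j\cong\varprojlim_k(\prod_{j=1}^{k}X_j,p_{k+1,k})$, and their compatibility (each is induced by the projections and $p_k\circ\sigma=\sigma_k\circ b_k$). Then, exactly as the presentation $\check{\pi}_n(X)=\varprojlim_k\pi_n(X_{\leq k})$ is justified by \cite[Ch.\ I, \S5.4, Thm.\ 13]{MS82}, the pair analogue gives $\check{\pi}_{n+1}(\prod_{j\in\bbn}X_j,X)=\varprojlim_k\pi_{n+1}(\prod_{j=1}^{k}X_j,X_{\leq k})$, the bonding maps being induced by the morphisms of pairs $(p_{k+1,k},b_{k+1,k})$.

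Second, for each fixed $k$ I would apply Theorem~\ref{exacttheorem} to $X_{\leq k}$, viewed as the shrinking wedge of the sequence $X_1,\dots,X_k,\{b_0\},\{b_0\},\dots$, obtaining the split short exact sequence of abelian groups
\[0\longrightarrow\pi_{n+1}\Big(\prod_{j=1}^{k}X_j,\,X_{\leq k}\Big)\xrightarrow{\ \partial_k\ }\pi_n(X_{\leq k})\xrightarrow{\ \sigma_{k\#}\ }\pi_n\Big(\prod_{j=1}^{k}X_j\Big)\longrightarrow0,\]
split by $\zeta_k\big([(f_j)_{j=1}^{k}]\big)=[\,f_1+\cdots+f_k\,]$, the finite-sum splitting provided by Theorem~\ref{exacttheorem}. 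I would then check that these assemble into a short exact sequence of inverse sequences: the squares built from the bonding maps together with $\partial_k$ and $\sigma_{k\#}$ commute by naturality of the long exact sequence of a pair applied to $(p_{k+1,k},b_{k+1,k})$, and the squares for the splittings commute because $b_{k+1,k}\circ(f_1+\cdots+f_{k+1})$ is the concatenation $f_1+\cdots+f_k+c$ with $c$ constant, hence homotopic to $f_1+\cdots+f_k$; so $b_{k+1,k\#}\circ\zeta_{k+1}=\zeta_k\circ p_{k+1,k\#}$ and $\{\zeta_k\}_k$ splits the sequence of inverse sequences levelwise.

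Third, I would apply $\varprojlim$. Because this split short exact sequence of inverse sequences carries the compatible family of splittings $\{\zeta_k\}$, the additive functor $\varprojlim$ sends it to a split short exact sequence
\[0\longrightarrow\check{\pi}_{n+1}\Big(\prod_{j\in\bbn}X_j,X\Big)\xrightarrow{\ \check{\partial}\ }\check{\pi}_n(X)\xrightarrow{\ \check{\sigma}\ }\varprojlim_k\pi_n\Big(\prod_{j=1}^{k}X_j\Big)\longrightarrow0,\]
where $\check{\partial}=\varprojlim_k\partial_k$, $\check{\sigma}=\varprojlim_k\sigma_{k\#}$, and the splitting is $\varprojlim_k\zeta_k$; the $\varprojlim^1$ of the left-hand inverse sequence, which in general obstructs right-exactness, is irrelevant precisely because of the compatible splitting. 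Finally, since the bonding maps $\pi_n(\prod_{j=1}^{k+1}X_j)\to\pi_n(\prod_{j=1}^{k}X_j)$ are the split coordinate projections $\prod_{j=1}^{k+1}\pi_n(X_j)\to\prod_{j=1}^{k}\pi_n(X_j)$, their inverse limit is $\prod_{j\in\bbn}\pi_n(X_j)\cong\pi_n(\prod_{j\in\bbn}X_j)$, the group in the statement. For naturality in $\{X_j\}_{j\in\bbn}$, a sequence of based maps of compact polyhedra $X_j\to Y_j$ induces compatible morphisms of all the inverse sequences above, and the splitting $\varprojlim_k\zeta_k$, being assembled from the sum operation, commutes with the induced maps.

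The only genuinely non-formal step is the first: one must confirm that the relative \v{C}ech group is computed by the inverse sequence of finite polyhedral pairs $(\prod_{j=1}^{k}X_j,X_{\leq k})$, i.e.\ that this sequence is an expansion of the pair $(\prod_{j\in\bbn}X_j,X)$ in the shape-theoretic sense. Granting that, the remainder is the routine passage of a naturally split short exact sequence of inverse sequences to a split short exact sequence of inverse limits, the naturality of $\zeta$ from Theorem~\ref{exacttheorem} being exactly what makes this passage exact.
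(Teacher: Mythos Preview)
Your proposal is correct and follows essentially the same route as the paper: assemble the finite-level split short exact sequences for the pairs $(\prod_{j=1}^{k}X_j,X_{\leq k})$ into an inverse sequence, invoke \cite[Ch.~I, \S5.4, Thm.~13]{MS82} to identify the limit of the relative groups with the relative \v{C}ech group, and pass to the inverse limit using the compatible sections. The only cosmetic difference is that the paper phrases the exactness-in-the-limit step via the Mittag-Leffler condition (using sections of the bonding maps induced by $\prod_{j=1}^{k}X_j\hookrightarrow\prod_{j=1}^{k+1}X_j$), whereas you use directly that an additive functor preserves split exact sequences; both arguments are standard and equivalent here.
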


\begin{proof}
We utilize the notation above for the finite wedge $X_{\leq k}=\bigvee_{j=1}^{k}X_j$ and bonding maps $b_{k+1,k}$. Let $\sigma_{k}:X_{\leq k}\to \prod_{j=1}^{k}X_j$ be the canonical embedding and let $s_k:\pi_n(\prod_{j=1}^{k}X_j)\to \pi_n(X_{\leq k})$ be the canonical section to the induced homomorphism $\sigma_{k\#}:\pi_n(X_{\leq k})\to \pi_n(\prod_{j=1}^{k}X_j)$. We have an inverse sequence of short exact sequences with bonding maps appearing vertically in the following diagram:
\[\xymatrix{0 \ar[r] &  \pi_{n+1}(\prod_{j=1}^{k+1}X_j,X_{\leq k+1}) \ar[d]^-{p_{k+1,k\#}} \ar[r]^-{\partial_{k+1}} &  \pi_n(X_{\leq k+1})  \ar[d]^-{b_{k+1,k\#}}  \ar[r]^-{\sigma_{k+1\#}} & \pi_n(\prod_{j=1}^{k+1}X_j)  \ar[d]^-{p_{k+1,k\#}}  \ar[r] & 0\\
0 \ar[r] &  \pi_{n+1}(\prod_{j=1}^{k}X_j,X_{\leq k}) \ar[r]^-{\partial_{k}} &  \pi_n(X_{\leq k}) \ar[r]^-{\sigma_{k\#}} & \pi_n(\prod_{j=1}^{k}X_j) \ar[r] & 0
}\]
In the above diagram, $p_{k+1,k}$ is the projection map and $\partial_k$ is the usual boundary map. The vertical morphism of exact sequences admits a canonical section induced by the based inclusion maps $\prod_{j=1}^{k}X_j\to \prod_{j=1}^{k+1}X_j$. Hence, the inverse sequence satisfies the Mittag-Leffler condition and, in the limit, we obtain an exact sequence. Set $\check{\partial}=\varprojlim_{k}\partial$ and $\check{\sigma}=\varprojlim_{k}\sigma_{k\#}$. Since the space $X_j$ are compact polyhedra, the pairs $(\prod_{j=1}^{k}X_j,X_{\leq k})$, $k\in\bbn$ form an $\mathbf{HPol_{\ast}^{2}}$-expansion for the pair $\left(\prod_{j\in\bbn}X_j,X\right)$. Hence, we may identify $ \check{\pi}_{n+1}\left(\prod_{j\in\bbn}X_j,X\right)$ with $\varprojlim_{k}\pi_{n+1}\left(\prod_{j=1}^{k}X_j,X_{\leq k}\right)$ \cite[Ch I, \textsection 5.4, Theorem 13]{MS82}. Under this identification, the inverse limit of exact sequences is the desired sequence in the statement of the lemma. Since the canonical sections $s_{k}$ of the maps $\sigma_{k\#}$ are natural, $\varprojlim_{k}s_k$ is a section of $\check{\sigma}$.
\end{proof}

Lemma \ref{exactlemma} implies that $\check{\pi}_n\left(\sw_{j\in\bbn}X_j\right)$ splits as
\[\check{\pi}_n\left(\sw_{j\in\bbn}X_j\right)\cong \check{\pi}_{n+1}\left(\prod_{j\in\bbn}X_j,\sw_{j\in\bbn}X_j\right)\oplus \pi_n\left(\prod_{j\in\bbn}X_j\right).\]
We refer to the image of the splitting map $\pi_n\left(\prod_{j\in\bbn}X_j\right)\to \check{\pi}_n\left(\sw_{j\in\bbn}X_j\right)$ as the \textit{weight-$1$ summand} of $\check{\pi}_n\left(\sw_{j\in\bbn}X_j\right)$. This terminology will be justified later on. We also obtain the following morphism of exact sequences induced by the projection maps $b_k$.
\[\xymatrix{
0 \ar[r] &  \pi_{n+1}\left(\prod_{j\in\bbn}X_j,\sw_{j\in\bbn}X_j\right) \ar[d]_-{\Psi_n} \ar[r]^-{\partial} &  \pi_n(\sw_{j\in\bbn}X_j) \ar[d]^-{\Psi_n} \ar[r]^-{\sigma_{\#}} & \pi_n(\prod_{j\in\bbn}X_j) \ar@{=}[d] \ar[r] & 0\\
0 \ar[r] &  \check{\pi}_{n+1}\left(\prod_{j=1}^{k}X_j,X_{\leq k}\right) \ar[r]^-{\check{\partial}} &  \check{\pi}_n(\sw_{j\in\bbn}X_j) \ar[r]^-{\check{\sigma}} & \pi_n(\prod_{j\in\bbn}X_j) \ar[r] & 0}\]
If we identify $\pi_n(\sw_{j\in\bbn}X_j)$ and $\check{\pi}_n(\sw_{j\in\bbn}X_j)$ with their splittings, we see that $\Psi_n$, indicated by the middle arrow, becomes identified with the identity map of $\pi_n(\prod_{j\in\bbn}X_j)$.

\section{Applying the Hilton-Milnor Theorem}\label{hiltonsthmsection}

For the remainder of the paper, fix monotone sequence of integers $1\leq r_1\leq r_2\leq r_3\leq \cdots$ and set $X=\sw_{i\in\bbn}S^{r_i+1}$ with wedgepoint $x_0$. Later, we will restrict to the case where $r_i=m-1$ for all $i$ so that $X=\bbe_m$. As before, let $X_{\leq k}=\bigvee_{i=1}^{k}S^{r_i+1}$ be the finite wedge of the first $k$-spheres so that $X=\varprojlim_{k}(X_{\leq k},b_{k+1,k})$ with bonding maps $b_{k+1,k}:X_{\leq k+1}\to X_{\leq k}$ and projection maps and $b_k:X\to X_{\leq k}$. We also write $X_{\geq k}=\sw_{i=k}^{\infty}S^{r_i+1}$ for the sub-shrinking wedge that excludes the first $(k-1)$ wedge-summands. Our first aim is to describe the $n$-th \v{C}ech homotopy group $\check{\pi}_n(X)=\varprojlim_{k}(\pi_n(X_{\leq k}),b_{k+1,k\#})$.

A non-trivial theorem of Eda-Kawamura (see \cite{EK00higher}) establishes some cases using higher connectivity. Since each $S^{r_i+1}$ is an $r_i$-connected CW-complex, we have $\pi_n(X)=\check{\pi}_n(X)=0$ for $1\leq n\leq r_1$. When $n=r_1+1$, we have that $\check{\pi}_n(X)\cong \prod_{i=1}^{\infty}\pi_n(S^{r_i+1})$ and that $\Psi_n$ is an isomorphism. Hence, \[\pi_{r_1+1}(X)\cong\check{\pi}_{r_1+1}(X)\cong \prod_{r_j=r_1}\pi_{r_1+1}(S^{r_j+1})\cong \prod_{r_j=r_1}\bbz,\]which is either finitely generated free abelian or the Baer-Specker group. Also it is shown in \cite{Kawamurasuspensions} that $\check{\pi}_{n+1}(\bbe_n)$ is isomorphic to $\bbz^{\bbn}$ when $n=2$ and $\bbz_{2}^{\bbn}$ when $n\geq 3$. 

To extend the argument used in \cite{Kawamurasuspensions}, we recall the notion of a Hall set, which has import in both group theory and Lie theory. In particular, a Hall set on a non-empty finite set $A_k=\{a_1,a_2,\dots,a_k\}$ provides a convenient basis for a free Lie algebra on $A_k$ \cite{HallMarsh}. Following Hilton's approach in \cite{HiltonSpheres}, we consider such a Hall set on $A_k=\{a_1,a_2,\dots,a_k\}$ where $a_i=[id_{S^{r_i+1}}]$ is the canonical generator of $\pi_{r_i+1}(S^{r_i+1})$.

\begin{definition}
A \textit{Hall set} on a non-empty finite set $A_k=\{a_1,a_2,\dots,a_k\}$ is a linearly ordered set $H_k$ of nested formal commutators determined by the following inductive collection and ordering process \cite{HallPhilip}: the set $H_{k}(1)$ of Hall words of \textit{weight $1$} is the set $A_k$ ordered as $a_1<a_2<\dots<a_k$. Suppose the set $H_k(i)$ of Hall words of weight $i$ has been defined for $i<m$ and that $\bigcup_{i<m}H_k(i)$ has been linearly ordered and so that if $i<j$, $x\in H_k(i)$, and $y\in H_k(j)$, then $x<y$. With this inductive hypothesis, we define a Hall word of weight $m$ to be a formal commutator $[x,y]$ such that
\begin{enumerate}
\item $x\in H_k(i)$ and $y\in H_k(j)$ where $i+j=m$,
\item $x<y$ in $\bigcup_{i<m}H_k(i)$,
\item if $y=[a,b]$ for $a,b\in \bigcup_{i<m}H_k(i)$, then $a\leq x$.
\end{enumerate}
We let $H_k(m)$ be the set of all Hall words of weight $m$ with an arbitrary linear ordering. Now $H_k=\bigcup_{i=1}^{\infty}H_k(i)$ is given a linear ordering that extends the linear ordering of each $\bigcup_{i<m}H_k(i)$ so that a Hall word of smaller weight is always less than another Hall word of greater weight.
\end{definition}

For each $i\in\{1,2,\dots,k\}$ and Hall word $w\in H_k$, we let $\nu_i(w)$ denote the number of times the letter $a_i$ appears in $w$ as a word (when the brackets are removed). The weight of a Hall word $w\in H_k$ is the same as the length $L(w)=\sum_{i=1}^{k}\nu_i(w)$ of $w$ as a word. 

\begin{example}
When $k=2$, the first elements of the Hall set $H_2$ are ordered as follows (where an arbitrary ordering is chosen for words at a fixed weight): $a_1$, $a_2$, $[a_1,a_2]$, $[a_1,[a_1,a_2]]$, $[a_2,[a_1,a_2]]$, $[a_1,[a_1,[a_1,a_2]]]$, $[a_2,[a_1,[a_1,a_2]]]$, $[a_2,[a_2,[a_1,a_2]]]$, $\dots$.
%
%
\end{example}

The number of Hall words $w\in H_{k}$ of weight $j$ is the value of necklace polynomial $M_{k}(j)=\frac{1}{j}\sum_{d\mid j}\mu(d)k^{j/d}$ where $\mu$ is the M\"{o}bius function. Next, we show that since $A_2\subseteq A_3\subseteq A_4\subseteq \cdots$, we can choose Hall sets $H_k$ on $A_k$ in a coherent way.

\begin{proposition}\label{nestedhallset}
There exists a sequence of linearly ordered sets $H_2\subseteq H_3\subseteq H_4\subseteq \cdots$ where $H_k$ is a Hall set on $A_k$ and $H_{k+1}\backslash H_{k}$ consists of Hall words involving the letter $a_{k+1}$.
\end{proposition}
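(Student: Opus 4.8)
The plan is to prove the proposition by a double induction: an outer induction on $k$ that builds $H_{k+1}$ from $H_k$, and, inside each outer step, an inner induction on the weight $m$ that produces the weight‑$m$ layer $H_{k+1}(m)$ from $H_k(m)$. The feature to exploit is that the notion of a Hall word is \emph{internal}: whether a formal commutator $[x,y]$ of weight $m$ is a Hall word depends only on the weights of $x$ and $y$, on the relative order of finitely many Hall words of weight $<m$, and on the sub‑commutators occurring in $y$ — and all of this data is unchanged when one enlarges the alphabet. The only genuine freedom in the definition, namely the ``arbitrary linear ordering'' placed on each $H_k(m)$, is exactly what will be used to arrange coherence.

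Take $H_2$ to be any Hall set on $A_2$, and assume a Hall set $H_k$ on $A_k$ has been constructed. Since a Hall word on $A_{k+1}$ is in particular a word in $a_1,\dots,a_{k+1}$, the set of Hall words on $A_{k+1}$ of each weight $m$ splits according to whether $\nu_{k+1}(w)=0$ or $\nu_{k+1}(w)\geq 1$. I construct $H_{k+1}(m)$ by induction on $m$ while maintaining the invariant: $\bigcup_{i\leq m}H_{k+1}(i)$ is linearly ordered with Hall words of smaller weight smaller, and the subset $\{w\in\bigcup_{i\leq m}H_{k+1}(i):\nu_{k+1}(w)=0\}$, carrying the induced order, coincides with $\bigcup_{i\leq m}H_{k}(i)$. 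For $m=1$ this holds because $H_{k+1}(1)=\{a_1<\dots<a_{k+1}\}$ has $\{a_1<\dots<a_k\}=H_k(1)$ as its initial segment. For the inductive step, observe that a weight‑$m$ formal commutator $[x,y]$ with $\nu_{k+1}([x,y])=0$ has $x$, $y$, and every sub‑commutator of $y$ equal to $a_{k+1}$‑free words, hence lying in $\bigcup_{i<m}H_k(i)$; by the invariant the order on these words agrees whether read off in $H_{k+1}$ or in $H_k$, so conditions (1)--(3) for $[x,y]$ hold over $A_{k+1}$ exactly when they hold over $A_k$. Thus $\{w\in H_{k+1}(m):\nu_{k+1}(w)=0\}$ equals $H_k(m)$ as a set. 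Now choose for $H_{k+1}(m)$ any linear order whose restriction to the subset $H_k(m)$ is the ordering already fixed on $H_k(m)$ — for instance list $H_k(m)$ in its prescribed order and then append the remaining (necessarily $a_{k+1}$‑involving) Hall words in any order — and place $H_{k+1}(m)$ above all lower weights. The invariant is preserved. Taking the union over $m$, the set $H_{k+1}=\bigcup_m H_{k+1}(m)$ with the resulting order is a Hall set on $A_{k+1}$ by construction, and the limiting form of the invariant shows $\{w\in H_{k+1}:\nu_{k+1}(w)=0\}=H_k$ as linearly ordered sets; hence $H_k\subseteq H_{k+1}$ and $H_{k+1}\setminus H_k=\{w\in H_{k+1}:\nu_{k+1}(w)\geq 1\}$ consists precisely of the Hall words involving $a_{k+1}$. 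Iterating over $k$ produces the chain $H_2\subseteq H_3\subseteq H_4\subseteq\cdots$.

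The only delicate point is the identity $\{w\in H_{k+1}(m):\nu_{k+1}(w)=0\}=H_k(m)$ — i.e. that ``being a Hall word'' does not depend on the ambient alphabet — and this is exactly where the weight‑induction invariant (that the two orderings agree on all relevant lower‑weight words) is needed; everything else is bookkeeping. One could equivalently run the argument by first building a single Hall set on the countably infinite alphabet $\{a_1,a_2,\dots\}$ (the inductive definition applies verbatim, each weight layer being a countable set) and then setting $H_k$ to be its subset of words in $a_1,\dots,a_k$ with the induced order; the verification that this $H_k$ is a Hall set on $A_k$ is the same internality argument.
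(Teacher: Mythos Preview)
Your proof is correct and follows essentially the same approach as the paper: both arguments hinge on the observation that the Hall conditions (1)--(3) are ``internal'' (whether an $a_{k+1}$-free bracket is a Hall word is the same over $A_k$ as over $A_{k+1}$, provided the lower-weight orderings agree), and both exploit the freedom in ordering each weight layer to arrange coherence. The only difference is organizational---the paper runs a single induction on the weight $m$ while treating all $k$ simultaneously and insists that each $H_k(m)$ be an \emph{initial} suborder of $H_{k+1}(m)$, whereas you nest an induction on $m$ inside an outer induction on $k$ and only require the restriction of the order to agree---but this is a cosmetic reorganization of the same argument, and your more explicit treatment of the internality step is, if anything, clearer than the paper's.
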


\begin{proof}
Since $H_k(1)=A_k$, the given ordering on the sets $A_k$ ensures that $H_k(1)$ is an initial suborder of $H_{k+1}(1)$ and that $H_{k+1}(1)\backslash H_{k}(1)=\{a_{k+1}\}$. Suppose that, for all $i<m$, we have constructed the sets $H_k(i)$, $k\in\bbn$ so that $H_{k}(i)$ is an initial suborder of $H_{k+1}(i)$ and that $H_{k+1}(i)\backslash H_{k}(i)$ consists of all elements of $H_{k+1}(i)$ involving the letter $a_{k+1}$. This induction hypothesis and the Hall set construction described above ensure that $H_2(m)\subseteq H_3(m)\subseteq H_4(m)\subseteq \cdots$ as subsets. It follows that $H_{k+1}(m)\backslash H_{k}(m)$ is the set of elements of $H_{k+1}(m)$ that involve the letter $a_{k+1}$. We choose the ordering of $H_2(m)$ arbitrarily and recursively choose the ordering on $H_{k+1}(m)$, $k\geq 2$ so that $H_{k}(m)$ is an initial suborder of $H_{k+1}(m)$. This completes the induction and we obtain Hall sets $H_k$ on $A_k$ such that for all $k,i\in\bbn$, $H_{k}(i)\subseteq H_{k+1}(i)$ where $H_{k+1}(i)\backslash H_{k}(i)$ is the set of Hall words in $H_{k+1}$ of weight $i$ that involve the letter $a_{k+1}$. Thus $H_k\subseteq H_{k+1}$ for all $k\in\bbn$ and if $w\in H_{k+1}\backslash H_k$ has weight $i$, then $w\in H_{k+1}(i)\backslash H_{k}(i)$.
\end{proof}

Since we have chosen $a_i$ to be an element of $\pi_{r_i+1}(S^{r_i+1})$, a Hall word $w\in H_k$ may be identified with the corresponding iterated Whitehead product, which lies in some homotopy group of $X_{\leq k}$. To identify the dimension of this homotopy group, we borrow the following definition from \cite{HiltonSpheres}.

\begin{definition}
The \textit{height} of a Hall word $w\in H_k$ is $h(w)=\sum_{i=1}^{k}r_i\nu_i(w)$.
\end{definition}

\begin{remark}
For a Hall word $w\in H_k$, the height $h(w)$ is one less than the dimension of the homotopy group in which $w$ is an element, that is $w\in \pi_{h(w)+1}(X_{\leq k})$. For example when $k=3$, we have $a_1\in \pi_{r_1+1}(S^{r_1+1})$, $a_1\in \pi_{r_2+1}(S^{r_2+1})$, and $a_3\in \pi_{r_3+1}(S^{r_3+1})$. Then
\begin{enumerate}
\item $w=[a_1,a_2]\in \pi_{r_1+r_2+1}(X_3)$ has height $h(w)=r_1+r_2$, 
\item $w=[a_1,[a_1,a_2]]\in \pi_{2r_1+r_2+1}(X_3)$ has height $h(w)=2r_1+r_2$, 
\item $w=[a_1,[a_2,a_3]]\in \pi_{r_1+r_2+r_3+1}(X_3)$ has height $h(w)=r_1+r_2+r_3$.
\end{enumerate}
\end{remark}

Because a Hall word is given by nesting Whitehead products in a binary fashion, Relation (1) of Remark \ref{relations} makes the following proposition evident.

\begin{proposition}\label{trivializedprop}
Let $k\geq 1$ and $w\in H_k$ be a Hall word regarded as a nested Whitehead product in $\pi_{h(w)+1}(X_{\leq k})$. If we replace any letter of $w$ with $0$, then the resulting element of $\pi_{h(w)+1}(X_{\leq k})$ is trivial.
\end{proposition}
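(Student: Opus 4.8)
The plan is to induct on the weight $L(w)$ of the Hall word $w$. For the base case $L(w)=1$, the word is a single letter $w=a_i$ for some $i\in\{1,\dots,k\}$, and replacing that letter with $0$ produces the trivial element $0\in\pi_{r_i+1}(X_{\leq k})$, so there is nothing to prove.

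For the inductive step, I would assume $L(w)=m\geq 2$ and that the claim holds for all Hall words of weight less than $m$. Write $w=[x,y]$ with $x,y\in H_k$ of weights summing to $m$, so that $x\in\pi_{h(x)+1}(X_{\leq k})$ and $y\in\pi_{h(y)+1}(X_{\leq k})$ and $w=[x,y]\in\pi_{h(w)+1}(X_{\leq k})$. Viewing $w$ as a nested Whitehead product, each occurrence of a letter sits at a leaf of the binary tree of the bracketing, and any given leaf lies in exactly one of the two subtrees corresponding to $x$ and $y$. If the leaf at which the replacement is performed lies in the subtree of $x$, then the replacement turns $x$ into an element $x'\in\pi_{h(x)+1}(X_{\leq k})$ obtained from the Hall word $x$ by replacing one of its letters with $0$; by the inductive hypothesis $x'=0$, so the element of $\pi_{h(w)+1}(X_{\leq k})$ resulting from the replacement in $w$ is $[0,y]$, which is trivial by the identity relation of Remark \ref{relations}(1). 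The case in which the leaf lies in the subtree of $y$ is symmetric: the replacement yields $[x,y']$ with $y'=0$, and $[x,0]=0$. This completes the induction.

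I do not expect a genuine obstacle here. The whole content of the statement is the identity relation $[\alpha,0]=0=[0,\beta]$, and the induction merely pushes the replacement down the binary bracketing of $w$ until it reaches the outermost bracket. The only point that deserves a moment's care is the bookkeeping remark that replacing a single leaf of $w$ corresponds, at the top level $w=[x,y]$, to replacing a single leaf of exactly one of $x$ or $y$ while leaving the other factor untouched; this is immediate once $w$ is drawn as a binary tree, and it is exactly what lets the inductive hypothesis apply to the proper subword before the final application of the identity relation.
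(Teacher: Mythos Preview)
Your induction on weight is correct and is exactly the explicit version of what the paper intends: the paper does not write out a proof but simply notes, immediately before the proposition, that ``Relation (1) of Remark \ref{relations} makes the following proposition evident,'' which is the same argument you carry out. Nothing more is needed.
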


Using Proposition \ref{nestedhallset}, we fix Hall sets $H_k$ on $A_k$ so that $H_1\subseteq H_2\subseteq H_3\subseteq \cdots$. The following is Hilton's initial version of the well-known Hilton-Milnor Theorem. We choose to use Hilton's original formulation in terms of homotopy groups of spheres and Whitehead products since our focus is on shrinking wedges of spheres.

\begin{theorem}[Hilton \cite{HiltonSpheres}]\label{hiltonstheorem}
For given $k\geq 1$ and $n>r_1$, there is an isomorphism $\phi_{n,k}:\bigoplus_{w\in H_k}\pi_n(S^{h(w)+1})\to \pi_n(X_{\leq k})$ determined by $\phi_{n,k}(f)=w\circ f$ for each $f\in \pi_n(S^{h(w)+1})$.
\end{theorem}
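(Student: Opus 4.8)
This is Hilton's classical computation of the homotopy of a finite wedge of spheres \cite{HiltonSpheres}, so the short answer is to cite it together with the standard development of the James construction; the argument itself goes as follows. Each summand is a reduced suspension, $S^{r_i+1}=\Sigma S^{r_i}$, so $X_{\leq k}=\Sigma Y$ for $Y=S^{r_1}\vee\cdots\vee S^{r_k}$, and hence $\pi_n(X_{\leq k})\cong\pi_{n-1}(\Omega X_{\leq k})$. For each Hall word $w\in H_k$, let $\bar w\colon S^{h(w)+1}=\Sigma S^{h(w)}\to X_{\leq k}$ be the iterated Whitehead product of the canonical inclusions $\iota_i\colon S^{r_i+1}\hookrightarrow X_{\leq k}$, bracketed according to the nesting of $w$; this is just the identification of the Hall word $w$ with an element of $\pi_{h(w)+1}(X_{\leq k})$ already in force above. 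Combining the loop maps $\Omega(\bar w)\colon\Omega S^{h(w)+1}\to\Omega X_{\leq k}$ via the multiplication of $\Omega X_{\leq k}$ produces a map $F$ from the weak infinite product of the $\Omega S^{h(w)+1}$, $w\in H_k$, to $\Omega X_{\leq k}$, and the Hilton--Milnor theorem asserts that $F$ is a weak homotopy equivalence.

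To see that, I would compute $F_\ast$ on integral homology. By the Bott--Samelson theorem $H_\ast(\Omega X_{\leq k})=H_\ast(\Omega\Sigma Y)$ is the tensor algebra $T(\widetilde H_\ast(Y))=T(\mathbb{Z}x_1\oplus\cdots\oplus\mathbb{Z}x_k)$ with $|x_i|=r_i$, while for each $w$ the homology $H_\ast(\Omega S^{h(w)+1})$ is the tensor algebra $T(\mathbb{Z}v_w)$ on a single generator $v_w$ of degree $h(w)$. The iterated Whitehead product $\bar w$ corresponds, under $\pi_\ast(X_{\leq k})\to\pi_\ast(\Omega X_{\leq k})\to H_\ast(\Omega X_{\leq k})$, to the iterated commutator of the $x_i$ in the order prescribed by $w$, i.e.\ to the basic product $b_w$ in the free Lie ring on the $x_i$ inside $T(\widetilde H_\ast(Y))$; since $F$ is built multiplicatively from the $\Omega(\bar w)$, the map $F_\ast$ sends an ordered monomial in the $v_w$ to the corresponding ordered monomial in the $b_w$. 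Because the basic products $\{b_w\}_{w\in H_k}$ furnish a Poincar\'e--Birkhoff--Witt basis of the free associative ring $T(\widetilde H_\ast(Y))$ — ordered monomials in the $b_w$ (in Hall order) form a $\mathbb{Z}$-basis — the map $F_\ast$ is an isomorphism of free $\mathbb{Z}$-modules. As source and target of $F$ are connected $H$-spaces, hence nilpotent, a homology isomorphism forces $F$ to be a weak homotopy equivalence.

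It remains to read off the conclusion. Applying $\pi_{n-1}$ and using that $\pi_{n-1}$ of a weak product is the direct sum of the $\pi_{n-1}$'s (compactness of $S^{n-1}$) and that loop concatenation induces addition on $\pi_{n-1}$, I obtain $\pi_n(X_{\leq k})\cong\pi_{n-1}(\Omega X_{\leq k})\cong\bigoplus_{w\in H_k}\pi_{n-1}(\Omega S^{h(w)+1})=\bigoplus_{w\in H_k}\pi_n(S^{h(w)+1})$. Under the adjunction identification $\pi_{n-1}(\Omega S^{h(w)+1})\cong\pi_n(S^{h(w)+1})$, the homomorphism induced by $\Omega(\bar w)$ is post-composition $f\mapsto\bar w\circ f$, so on the $w$-summand this isomorphism sends $f\in\pi_n(S^{h(w)+1})$ to $w\circ f$, which is exactly the formula defining $\phi_{n,k}$; the hypothesis $n>r_1$ only serves to keep us in the higher-homotopy range where these identifications are unambiguous (and for $n\leq r_1$ both sides vanish). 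The one genuine obstacle is the Hilton--Milnor equivalence itself — equivalently, the Poincar\'e--Birkhoff--Witt basis of the tensor algebra by basic products and the verification that $F$ realizes it; recognizing the suspension, the Hall-word/Whitehead-product bookkeeping, and the descent to $\pi_{n-1}$ are all formal, so in the paper I would simply cite \cite{HiltonSpheres} (and a standard source for the James construction) rather than reproduce the above.
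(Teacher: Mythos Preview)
Your proposal is correct, and your final remark is exactly what the paper does: the theorem is stated with attribution to Hilton \cite{HiltonSpheres} and no proof is given in the paper itself. Your sketch via the James construction, Bott--Samelson, and the PBW basis is a standard modern route to the result (somewhat different from Hilton's original 1955 argument, which worked directly with James's reduced product), but since the paper treats this as a black-box citation, there is nothing further to compare.
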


\begin{remark}
Although the direct sum $\bigoplus_{w\in H_k}\pi_n(S^{h(w)+1})$ in Hilton's Theorem is indexed by the infinite set $H_k$, only finitely many summands are non-trivial since $n$ is fixed. Indeed, the set $H_{n,k}=\{w\in H_k\mid h(w)+1\leq n\}$ is finite and the domain of $\phi_{n,k}$ can be more efficiently expressed as the finite direct sum $\bigoplus_{w\in H_{n,k}}\pi_n(S^{h(w)+1})$.
\end{remark}

\begin{lemma}\label{commuteslemma}
Fix $k\geq 1$, $n>r_1$, and $w\in H_{n,k+1}$. If $w\in H_{n,k}$, then the homomorphism \[b_{k+1,k\#}\circ \phi_{n,k+1}:\bigoplus_{v\in H_{n,k+1}}\pi_n(S^{h(v)+1})\to \pi_{n}(X_{\leq k})\] maps the summand $\pi_n(S^{h(w)+1})$ isomorphically onto $\phi_{n,k}(\pi_n(S^{h(w)+1}))$ and if $w\in H_{n,k+1}\backslash H_{n,k}$, then $b_{k+1,k\#}\circ \phi_{n,k+1}$ trivializes $\pi_n(S^{h(w)+1})$.
\end{lemma}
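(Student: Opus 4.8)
The plan is to compute $b_{k+1,k\#}$ on each Hall-word summand directly, using the naturality of Whitehead products together with the two structural facts already established about Hall words.

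First I would record the effect of $b_{k+1,k\#}$ on generators. The bonding map $b_{k+1,k}:X_{\leq k+1}\to X_{\leq k}$ collapses the last sphere $S^{r_{k+1}+1}$ to the wedgepoint and restricts to the identity on $X_{\leq k}$. Hence, if $\iota_i:S^{r_i+1}\to X_{\leq k+1}$ denotes the canonical inclusion, then $b_{k+1,k}\circ\iota_i$ is the canonical inclusion $S^{r_i+1}\to X_{\leq k}$ when $i\leq k$ and is constant when $i=k+1$. On homotopy classes this gives $b_{k+1,k\#}(a_i)=a_i$ for $i\leq k$ and $b_{k+1,k\#}(a_{k+1})=0$. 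Since a Hall word $w$, regarded as an element of $\pi_{h(w)+1}(X_{\leq k+1})$, is an iterated Whitehead product in the generators $a_1,\dots,a_{k+1}$, Remark \ref{inducedmapremark} shows that $b_{k+1,k\#}(w)$ is the iterated Whitehead product obtained from $w$ by applying $b_{k+1,k\#}$ to each letter. I would also note the elementary identity $b_{k+1,k\#}(\alpha\circ f)=b_{k+1,k\#}(\alpha)\circ f$ for $\alpha\in\pi_{h(w)+1}(X_{\leq k+1})$ and $f\in\pi_n(S^{h(w)+1})$, immediate from composing representing maps.

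For the first case, suppose $w\in H_{n,k}$. Then every letter of $w$ lies in $\{a_1,\dots,a_k\}$, so $b_{k+1,k\#}$ carries the iterated Whitehead product $w\in\pi_{h(w)+1}(X_{\leq k+1})$ to the corresponding iterated Whitehead product $w\in\pi_{h(w)+1}(X_{\leq k})$. Therefore, for $f\in\pi_n(S^{h(w)+1})$,
\[
b_{k+1,k\#}\bigl(\phi_{n,k+1}(f)\bigr)=b_{k+1,k\#}(w\circ f)=b_{k+1,k\#}(w)\circ f=w\circ f=\phi_{n,k}(f),
\]
so $b_{k+1,k\#}\circ\phi_{n,k+1}$ restricted to the summand $\pi_n(S^{h(w)+1})$ agrees with the restriction of $\phi_{n,k}$ to the same summand, which by Theorem \ref{hiltonstheorem} is an isomorphism onto $\phi_{n,k}\bigl(\pi_n(S^{h(w)+1})\bigr)$.

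For the second case, suppose $w\in H_{n,k+1}\setminus H_{n,k}$. By Proposition \ref{nestedhallset}, the word $w$ contains at least one occurrence of the letter $a_{k+1}$. Applying $b_{k+1,k\#}$ to the iterated Whitehead product $w$ thus produces an iterated Whitehead product in which at least one letter has been replaced by $0$, and by Proposition \ref{trivializedprop} this element of $\pi_{h(w)+1}(X_{\leq k})$ is trivial. Hence $b_{k+1,k\#}(\phi_{n,k+1}(f))=b_{k+1,k\#}(w)\circ f=0$ for every $f\in\pi_n(S^{h(w)+1})$, i.e. $b_{k+1,k\#}\circ\phi_{n,k+1}$ trivializes this summand. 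There is no serious obstacle here: the statement is essentially a naturality bookkeeping lemma, and the only point requiring care is the identification of the Hall word $w\in H_k$ inside $\pi_{h(w)+1}(X_{\leq k})$ with $b_{k+1,k\#}$ of the Hall word $w\in H_{k+1}$ inside $\pi_{h(w)+1}(X_{\leq k+1})$ — which is exactly what the relation $b_{k+1,k}|_{X_{\leq k}}=\mathrm{id}$ and Remark \ref{inducedmapremark} deliver.
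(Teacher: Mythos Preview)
Your proposal is correct and follows essentially the same approach as the paper: compute $b_{k+1,k\#}\circ\phi_{n,k+1}(f)=[b_{k+1,k}]\circ w\circ f$, observe via the naturality of Whitehead products that $[b_{k+1,k}]\circ w$ is the same nested bracket with each $a_{k+1}$ replaced by $0$, and then invoke Proposition~\ref{trivializedprop} and Proposition~\ref{nestedhallset} to handle the two cases. Your version is slightly more explicit (you spell out the effect on generators and cite Remark~\ref{inducedmapremark} and Theorem~\ref{hiltonstheorem} for the isomorphism claim), but the argument is the same.
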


\begin{proof}
If $f\in \pi_n(S^{h(w)+1})$, then $b_{k+1,k\#}\circ \phi_{n,k+1}(f)=[b_{k+1,k}]\circ w\circ f$. Note that $[b_{k+1,k}]\circ w$ is still a Whitehead product with the same nested structure but where each appearance of the letter $a_{k+1}$ is trivialized. Hence, if $a_{k+1}$ appears in $w$, then $[b_{k+1,k}]\circ w=0$ by Proposition \ref{trivializedprop}. It follows that if $w\in H_{n,k+1}\backslash H_{n,k}$ then $[b_{k+1,k}]\circ w\circ f=0$. On the other hand, if $w\in H_{n,k}$, then $a_{k+1}$ does not appear as a letter in $w$, which implies that $[b_{k+1,k}]\circ w=w$ and thus $b_{k+1,k\#}\circ \phi_{n,k+1}(f)=w\circ f=\phi_{n,k}(f)$.
\end{proof}

Define $H_{\infty}=\bigcup_{k=1}^{\infty}H_k$ and $H_{n,\infty}=\bigcup_{k=1}^{\infty}H_{n,k}=\{w\in H_{\infty} \mid h(w)+1\leq n\}$ and note that both of these sets are infinite.

\begin{theorem}\label{cechtheorem1}
If $n>r_1$, then there is a canonical isomorphism \[\phi_{n,\infty}:\prod_{w\in H_{n,\infty}}\pi_n(S^{h(w)+1})\to \check{\pi}_n(X)\] given by $\phi_{n,\infty}((f_w)_{w\in H_{n,\infty}})=\left(\sum_{w\in H_{n,k}}w\circ f_w\right)_{k\in\bbn}$.
\end{theorem}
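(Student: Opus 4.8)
The plan is to build $\phi_{n,\infty}$ by combining Hilton's Theorem (Theorem~\ref{hiltonstheorem}) with the compatibility of the $\phi_{n,k}$ with the bonding maps recorded in Lemma~\ref{commuteslemma}, and then to carry out a routine computation of the inverse limit of the resulting ``truncation'' system of direct sums. Recall that $\check{\pi}_n(X)=\varprojlim_k(\pi_n(X_{\leq k}),b_{k+1,k\#})$. For $n>r_1$, Theorem~\ref{hiltonstheorem} together with the observation that only finitely many summands are nontrivial gives an isomorphism $\phi_{n,k}\colon\bigoplus_{w\in H_{n,k}}\pi_n(S^{h(w)+1})\xrightarrow{\ \cong\ }\pi_n(X_{\leq k})$ which on the $w$-th summand is $f\mapsto w\circ f$. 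Since each $H_{n,k}$ is finite, each of these direct sums coincides with the corresponding product and $\phi_{n,k}$ is additive; note also that this finiteness means the sums $\sum_{w\in H_{n,k}}w\circ f_w$ appearing in the statement are genuine finite sums in $\pi_n(X_{\leq k})$.

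First I would make the truncation system explicit. By the coherence of the Hall sets (Proposition~\ref{nestedhallset}) we have $H_{n,k}\subseteq H_{n,k+1}$, so there is a canonical coordinate projection $p_{k+1,k}\colon\bigoplus_{w\in H_{n,k+1}}\pi_n(S^{h(w)+1})\to\bigoplus_{w\in H_{n,k}}\pi_n(S^{h(w)+1})$, the identity on summands indexed by $H_{n,k}$ and zero on those indexed by $H_{n,k+1}\setminus H_{n,k}$. The proof of Lemma~\ref{commuteslemma} shows, summand by summand, that $b_{k+1,k\#}\circ\phi_{n,k+1}=\phi_{n,k}\circ p_{k+1,k}$. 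Hence $\{\phi_{n,k}\}_{k\in\bbn}$ is an isomorphism of inverse sequences from $\big(\bigoplus_{w\in H_{n,k}}\pi_n(S^{h(w)+1}),\,p_{k+1,k}\big)$ onto $\big(\pi_n(X_{\leq k}),\,b_{k+1,k\#}\big)$, and therefore induces an isomorphism of inverse limits $\Phi=\varprojlim_k\phi_{n,k}$, with target $\check{\pi}_n(X)$.

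Next I would compute the source of $\Phi$. Because each $H_{n,k}$ is finite and $\bigcup_k H_{n,k}=H_{n,\infty}$, the restriction maps give an isomorphism $\prod_{w\in H_{n,\infty}}\pi_n(S^{h(w)+1})\xrightarrow{\ \cong\ }\varprojlim_k\bigoplus_{w\in H_{n,k}}\pi_n(S^{h(w)+1})$: a thread $(x_k)_{k\in\bbn}$ in the inverse limit is exactly the choice of one coordinate for each $w\in H_{n,\infty}$ (the $w$-coordinate of any $x_k$ with $w\in H_{n,k}$, well defined by the compatibility under the $p_{k+1,k}$), with no constraint on the support, while conversely an arbitrary element of the product restricts to a thread. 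Composing, we obtain an isomorphism $\phi_{n,\infty}\colon\prod_{w\in H_{n,\infty}}\pi_n(S^{h(w)+1})\to\check{\pi}_n(X)$, and unwinding the two identifications --- the product-to-limit restriction, followed by $\varprojlim_k\phi_{n,k}$ applied coordinatewise --- gives
\[\phi_{n,\infty}((f_w)_{w\in H_{n,\infty}})=(\phi_{n,k}((f_w)_{w\in H_{n,k}}))_{k\in\bbn}=\Big(\sum_{w\in H_{n,k}}w\circ f_w\Big)_{k\in\bbn},\]
which is the asserted formula. This $\phi_{n,\infty}$ is canonical in the sense that it depends only on the coherent family of Hall sets fixed earlier.

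The genuinely substantive point is the middle step --- that the inverse limit of the truncation system is the full product $\prod_{w\in H_{n,\infty}}\pi_n(S^{h(w)+1})$ rather than the direct sum $\bigoplus_{w\in H_{n,\infty}}\pi_n(S^{h(w)+1})$, i.e. that passing to the \v{C}ech limit destroys the finite-support condition present at each finite stage. This is precisely the mechanism by which a product of homotopy groups of spheres appears in $\check{\pi}_n(X)$, and it rests only on the finiteness of each $H_{n,k}$ together with the bonding maps being plain coordinate projections; everything else is formal, given Theorem~\ref{hiltonstheorem} and Lemma~\ref{commuteslemma}.
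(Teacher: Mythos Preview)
Your proof is correct and follows essentially the same approach as the paper: define the coordinate projections $p_{k+1,k}$, use Lemma~\ref{commuteslemma} to obtain a commuting ladder of isomorphisms $\phi_{n,k}$, and then identify the inverse limit of the truncation system with the full product $\prod_{w\in H_{n,\infty}}\pi_n(S^{h(w)+1})$. Your writeup is somewhat more explicit than the paper's about why the limit is the product rather than the direct sum, but the argument is the same.
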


\begin{proof}
For each $k\geq 1$, let $p_{k+1,k}:\bigoplus_{w\in H_{n,k+1}}\pi_n(S^{h(w)+1})\to\bigoplus_{w\in H_{n,k}}\pi_n(S^{h(w)+1})$ be the unique homomorphism, which trivializes $\pi_n(S^{h(w)+1})$ if $w\in H_{n,k+1}\backslash H_{n,k}$ and is the identity on $\pi_n(S^{h(w)+1})$ if $w\in H_{n,k}$. By Lemma \ref{commuteslemma}, the following diagram commutes where the horizontal maps are isomorphisms from Hilton's Theorem.
\[\xymatrix{
\bigoplus_{w\in H_{n,k+1}}\pi_n(S^{h(w)+1}) \ar[rr]^-{\phi_{n,k+1}} \ar[d]_-{p_{k+1,k}} && \pi_n(X_{\leq k+1}) \ar[d]^-{b_{k+1,k\#}} \\
\bigoplus_{w\in H_{n,k}}\pi_n(S^{h(w)+1}) \ar[rr]_-{\phi_{n,k}} && \pi_n(X_{\leq k})
}\]
Because the maps $p_{k+1,k}$ are projection maps of a finite direct product, the inverse limit of the left vertical morphisms is the direct product $\prod_{w\in H_{n,\infty}}\pi_n(S^{h(w)+1})$. Hence, taking the corresponding inverse limit of the isomorphisms $\phi_{n,k}$ as $k\to \infty$ yields the desired isomorphism.
\end{proof}

\begin{remark}
Since $\pi_n(S^{h(w)+1})=0$ whenever $w\in H_{\infty}\backslash H_{n,\infty}$, we can also express the isomorphism $\phi_{n,\infty}$ as a map
\[\phi_{n,\infty}:\prod_{w\in H_{\infty}}\pi_n(S^{h(w)+1})\to \check{\pi}_n(X)\] with the same formula as that given in Theorem \ref{cechtheorem1}.
\end{remark}

\begin{remark}
When $w=a_i\in H_{\infty}$ has weight $L(w)=1$, we have $h(w)=r_i$ and thus $w$ corresponds to a factor $\pi_n(S^{r_i+1})$ in the direct product of Theorem \ref{cechtheorem1}. Collecting all such factors together into a single summand, we have $\check{\pi}_n(X)\cong \prod_{L(w)>1}\pi_n(S^{h(w)+1})\oplus \prod_{i=1}^{\infty}\pi_n(S^{r_i+1})$ where $\prod_{i=1}^{\infty}\pi_n(S^{r_i+1})$ is the previously-defined weight-$1$ summand of $\check{\pi}_n(X)$. Hence, \[\check{\pi}_{n+1}\left(\prod_{i=1}^{k}S^{r_i+1},\bigvee_{i=1}^{k}S^{r_i+1}\right)\cong \prod_{L(w)>1}\pi_n(S^{h(w)+1}).\]
\end{remark}

We are particularly interested in the case where $r_i=m-1\geq 1$ so that $\sw_{i}S^{r_i+1}=\bbe_{m}$. To analyze this case more carefully, we partition $H_{\infty}$ as follows: Let $H_{\infty}(j)=\bigcup_{k=1}^{\infty}H_k(j)$ be the Hall words of weight $j$, e.g. $H_{\infty}(1)=\{a_1,a_2,a_3,\cdots\}$ and $H_{\infty}(2)=\{[a_i,a_j]\mid i<j\}$. Note that for each $j\geq 1$, $H_{\infty}(j)$ is countably infinite.

Next, we prove our first main result.

\begin{proof}[Proof of Theorem \ref{thm1}]
By Theorem \ref{cechtheorem1}, $\check{\pi}_n(\bbe_m)$ is isomorphic to the direct product $\prod_{w\in H_{\infty}}\pi_n(S^{h(w)+1})$. We prove the current theorem by rearranging the terms in this product. In this case, we have $r_i=m-1$ for all $i$. Thus each Hall word $w\in H_k$ has height $h(w)=\sum_{i=1}^{k}(m-1)\nu_i(k)=(m-1)L(w)$ where $L(w)$ is the weight/length of $w$. Recall that we have defined $H_{\infty}(j)=\{w\in H_{\infty}\mid L(w)=j\}$ to be the Hall words of weight $j$. Thus
\[
\check{\pi}_n(\bbe_m) \cong \prod_{w\in H_{\infty}}\pi_n(S^{(m-1)L(w)+1})
\cong  \prod_{j=1}^{\infty}\prod_{w\in H_{\infty}(j)}\pi_n(S^{(m-1)j+1}).
\]
As $j\to\infty$, the groups $\pi_n(S^{(m-1)j+1})$ have the potential to be non-trivial only when $n\geq (m-1)j+1$ or, equivalently, when $j\leq \frac{n-1}{m-1}$. Hence, $\prod_{w\in H_{\infty}(j)}\pi_n(S^{(m-1)j+1})=0$ when $j> \frac{n-1}{m-1}$. We have:
\begin{eqnarray*}
 \prod_{j=1}^{\infty}\prod_{w\in H_{\infty}(j)}\pi_n(S^{(m-1)j+1}) &\cong & \bigoplus_{1\leq j\leq \frac{n-1}{m-1}}\prod_{w\in H_{\infty}(j)}\pi_n(S^{(m-1)j+1}) \\
&\cong & \bigoplus_{1\leq j\leq \frac{n-1}{m-1}}\left(\pi_n(S^{(m-1)j+1})\right)^{\bbn} \\
\end{eqnarray*}
where the last isomorphism holds since $m$ is fixed and $H_{\infty}(j)$ is countably infinite.
\end{proof}

We will often express $\check{\pi}_n(\bbe_m)$ as the direct sum $\bigoplus_{1\leq j\leq \frac{n-1}{m-1}}\mcw_j$ where $\mcw_j$ is the image of $\prod_{w\in H_{\infty}(j)}\pi_n(S^{(m-1)j+1})$ under the isomorphism $\phi_{n,\infty}$. We refer to $\mcw_j$ as the \textit{weight-$j$ summand} of $\check{\pi}_n(\bbe_m)$. Since $\mcw_1\cong\prod_{w\in H_{\infty}(1)}\pi_n(S^m)$ corresponds precisely to the ``weight-$1$ summand" of $\check{\pi}_n(\bbe_m)$ as defined in Section \ref{sectionshrinkingwedge}, we have the following.

\begin{corollary}
If $n> m\geq 2$, then \[\check{\pi}_{n+1}\left(\prod_{j\in\bbn}S^m,\bbe_m\right)\cong \bigoplus_{2\leq j\leq \frac{n-1}{m-1}}\left(\pi_{n}(S^{mj-j+1})\right)^{\bbn}.\]
\end{corollary}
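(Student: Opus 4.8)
The plan is to read the statement off the weight-graded form of $\check{\pi}_n(\bbe_m)$ already established, discarding the weight-one part (which the splitting of Lemma~\ref{exactlemma} isolates) and the high-weight terms that vanish for dimension reasons. Throughout I work in the setting of Section~\ref{hiltonsthmsection} with $r_i=m-1$ for all $i$, so that $\sw_i S^{r_i+1}=\bbe_m$, and the hypothesis $n>m$ guarantees $n>r_1$, which is all that Theorem~\ref{cechtheorem1} requires.

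First I would invoke the remark following Theorem~\ref{cechtheorem1}: after collecting the weight-one Hall words $a_i$ into a single summand, the isomorphism $\phi_{n,\infty}$ identifies the relative \v{C}ech group with the weight-$>1$ part of the Hilton decomposition,
\[
\check{\pi}_{n+1}\!\left(\prod_{j\in\bbn}S^m,\,\bbe_m\right)\;\cong\;\prod_{\,w\in H_{\infty},\ L(w)>1}\pi_n\!\left(S^{h(w)+1}\right),
\]
where the splitting of Lemma~\ref{exactlemma} together with the formula $\zeta([(f_j)_j])=[\sum_j f_j]$ from Theorem~\ref{exacttheorem} is what matches the ``weight-one summand'' in the sense of Section~\ref{sectionshrinkingwedge} with the subproduct indexed by the $a_i$ (and $\pi_n(\prod_j S^m)\cong\prod_j\pi_n(S^m)$ since $\pi_n$ commutes with products). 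Next, exactly as in the proof of Theorem~\ref{thm1}, every Hall word now has height $h(w)=(m-1)L(w)$, so grouping the factors by weight gives
\[
\prod_{L(w)>1}\pi_n\!\left(S^{h(w)+1}\right)\;\cong\;\prod_{j=2}^{\infty}\ \prod_{w\in H_{\infty}(j)}\pi_n\!\left(S^{(m-1)j+1}\right).
\]
Since $\pi_n(S^{(m-1)j+1})=0$ whenever $(m-1)j+1>n$, i.e.\ whenever $j>\frac{n-1}{m-1}$, the product collapses to the finite range $2\le j\le\frac{n-1}{m-1}$, a finite product of abelian groups and hence a direct sum; using that each $H_{\infty}(j)$ is countably infinite and that $(m-1)j+1=mj-j+1$, this is $\bigoplus_{2\le j\le\frac{n-1}{m-1}}\big(\pi_n(S^{mj-j+1})\big)^{\bbn}$, as claimed. (Equivalently, one can subtract directly: by Theorem~\ref{thm1} one has $\check{\pi}_n(\bbe_m)=\bigoplus_{1\le j\le\frac{n-1}{m-1}}\mcw_j$, the summand $\mcw_1$ is the weight-one summand, and cancelling it in the splitting of Lemma~\ref{exactlemma} leaves $\bigoplus_{2\le j\le\frac{n-1}{m-1}}\mcw_j\cong\bigoplus_{2\le j\le\frac{n-1}{m-1}}\big(\pi_n(S^{mj-j+1})\big)^{\bbn}$.)

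I do not expect a genuine obstacle: the corollary is a bookkeeping consequence of Theorem~\ref{cechtheorem1}, Lemma~\ref{exactlemma}, and the weight-rearrangement already carried out for Theorem~\ref{thm1}. The one point that needs care is the identification of the two \emph{a priori} different ``weight-one summands''---the one cut out by the product/wedge splitting of Lemma~\ref{exactlemma} and the one coming from the Hall grading---but this matching is precisely what the remark after Theorem~\ref{cechtheorem1} records, so I would cite it rather than reprove it.
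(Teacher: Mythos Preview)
Your proposal is correct and matches the paper's approach: the corollary is stated without a separate proof, justified only by the sentence preceding it, which says that $\mcw_1\cong\prod_{w\in H_{\infty}(1)}\pi_n(S^m)$ coincides with the weight-$1$ summand from the splitting of Lemma~\ref{exactlemma}, so that removing $\mcw_1$ from the decomposition of Theorem~\ref{thm1} leaves exactly the relative \v{C}ech group. Your write-up simply makes this reasoning explicit.
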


\begin{example}\label{specialcaseexample}
Some special cases worth noting are the following.
\begin{enumerate}
\item When $n=3$ and $m=2$, we have a summand for both $j=1,2\leq \frac{3-1}{2-1}$. In this case,
\[\check{\pi}_3(\bbe_2)\cong (\pi_3(S^2))^{\bbn}\oplus (\pi_3(S^3))^{\bbn}\cong \bbz^{\bbn}\oplus\bbz^{\bbn}\]
Although this is abstractly isomorphic to $\bbz^{\bbn}$, this decomposition shows that the first summand of $\bbz^{\bbn}\oplus\bbz^{\bbn}$ is the weight-$1$ summand of $\check{\pi}_3(\bbe_2)$.
\item For the groups $\check{\pi}_{n+1}(\bbe_n)$, $n\geq 3$, we only have a summand for $j=1\leq \frac{n}{n-1}$. In this case, we enter the stable range and have
\[\check{\pi}_{n+1}(\bbe_n)\cong (\pi_{n+1}(S^n))^{\bbn}\cong \bbz_{2}^{\bbn},\]
the entirety of which is the weight-$1$ summand. This agrees with the computation in \cite{Kawamurasuspensions}.
\item For $n=4$ and $m=2$, we have a summand for $j=1,2,3\leq \frac{4-1}{2-1}$. In this case, we have
\[\check{\pi}_4(\bbe_2)\cong(\pi_4(S^2))^{\bbn}\oplus (\pi_4(S^3))^{\bbn}\oplus (\pi_4(S^4))^{\bbn}\cong \bbz_{2}^{\bbn}\oplus \bbz_{2}^{\bbn}\oplus \bbz^{\bbn}\]
where $\mcw_1\cong(\pi_4(S^2))^{\bbn}\cong\bbz_{2}^{\bbn}$ is the weight-$1$ summand.
More generally, when $n\geq 4$, we have
\[\check{\pi}_n(\bbe_2)\cong \pi_n(S^2)^{\bbn}\oplus \pi_n(S^3)^{\bbn}\oplus \cdots \oplus \pi_n(S^n)^{\bbn}\]where $\mcw_j=\pi_n(S^{j+1})^{\bbn}$ corresponds to the weight-$j$ summand.
\end{enumerate}
\end{example}

\begin{corollary}
For fixed $n\geq 0$, the \v{C}ech homotopy groups $\check{\pi}_{m+n}(\bbe_m)$ stabilize when $m\geq n+2$.
\end{corollary}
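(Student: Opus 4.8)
The plan is to derive this corollary directly from Theorem~\ref{thm1} and the Freudenthal suspension theorem, with no new geometric input. First I would substitute the homotopy degree $m+n$ into Theorem~\ref{thm1} (legitimate since $m+n\geq m\geq 2$), which gives
\[\check{\pi}_{m+n}(\bbe_m)\cong \bigoplus_{1\leq j\leq \frac{m+n-1}{m-1}}\left(\pi_{m+n}(S^{mj-j+1})\right)^{\bbn}.\]
The next step is to pin down which summands survive in the stated range. Writing $\frac{m+n-1}{m-1}=1+\frac{n}{m-1}$, the hypothesis $m\geq n+2$ gives $\frac{n}{m-1}\leq \frac{n}{n+1}<1$, so the index $j$ is forced to equal $1$ and only the weight-$1$ summand remains. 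Hence $\check{\pi}_{m+n}(\bbe_m)\cong \left(\pi_{m+n}(S^m)\right)^{\bbn}$ for every $m\geq n+2$.

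It then remains to observe that the group $\pi_{m+n}(S^m)$ itself no longer depends on $m$ once $m\geq n+2$. Indeed $m\geq n+2$ is exactly the condition $m+n< 2m-1$, so the Freudenthal suspension homomorphism $\pi_{m+n}(S^m)\to\pi_{m+n+1}(S^{m+1})$ is an isomorphism, and therefore $\pi_{m+n}(S^m)$ is canonically identified with the $n$-th stable stem $\pi_n^s$ for all $m\geq n+2$. Since a countable power of a group depends, up to isomorphism, only on the group, we conclude $\check{\pi}_{m+n}(\bbe_m)\cong (\pi_n^s)^{\bbn}$ for all $m\geq n+2$, which is the asserted stabilization.

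I do not anticipate any real obstacle here: the entire content is already packaged in Theorem~\ref{thm1}, and the only points requiring care are (i) verifying that the index bound annihilates every weight-$j$ summand with $j\geq 2$ in the range $m\geq n+2$, and (ii) citing the correct edge of the Freudenthal range, namely that $m\geq n+2$ (equivalently $m+n\leq 2m-2$) is precisely where suspension on $\pi_{m+n}(S^m)$ is an isomorphism. It would also be worth remarking that the bound is sharp: at $m=n+1$ one has $\frac{m+n-1}{m-1}=2$, and the additional weight-$2$ summand contributes $\left(\pi_{2n+1}(S^{2n+1})\right)^{\bbn}\cong \bbz^{\bbn}$, so $\check{\pi}_{2n+1}(\bbe_{n+1})$ is in general not isomorphic to the stabilized value $(\pi_n^s)^{\bbn}$.
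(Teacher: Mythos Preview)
Your argument is correct and follows exactly the same approach as the paper's proof: apply Theorem~\ref{thm1} with homotopy degree $m+n$, observe that $\frac{m+n-1}{m-1}<2$ for $m\geq n+2$ so only the $j=1$ summand survives, and then invoke stabilization of $\pi_{m+n}(S^m)$ in the Freudenthal range. Your version is in fact more detailed than the paper's (you explicitly identify the stable stem and check the Freudenthal edge), and the sharpness remark at $m=n+1$ is a correct and useful addition.
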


\begin{proof}
When $m\geq n+2$, we have $\frac{m+n-1}{m-1}<2$ and thus Theorem \ref{thm1} only gives a single summand corresponding to $j=1$. In particular, $\check{\pi}_{m+n}(\bbe_m)\cong \left(\pi_{m+n}(S^m)\right)^{\bbn}$. Since the groups $\pi_{m+n}(S^m)$ stabilize when $m\geq n+2$, the result follows.
\end{proof}


\section{More on the image of $\Psi_n:\pi_n(\bbe_m)\to\check{\pi}_n(\bbe_m)$}\label{sectionimage}

To identify new elements of the image of the canonical homomorphism $\Psi_n:\pi_n(\bbe_m)\to\check{\pi}_n(\bbe_m)$, we use the isomorphism $\phi_{n,\infty}:\bigoplus_{1\leq j\leq \frac{n-1}{m-1}}\left(\pi_{n}(S^{mj-j+1})\right)^{\bbn}\to \check{\pi}_n(\bbe_m)$ as a book-keeping device. 

\begin{remark}\label{genericremark}
The set $H_k(j)$ of all Hall words of weight $j$ involving letters $a_1,\dots,a_k$ provides a convenient indexing set for the elements of the weight-$j$ summand $\mcw_{j}=\phi_{n,\infty}(\left(\pi_{n}(S^{mj-j+1})\right)^{\bbn})$. According to the formula for $\phi_{n,\infty}$ in Theorem \ref{cechtheorem1}, a generic element of $\mcw_j$ has the form $\left(\sum_{w\in H_{k}(j)}w\circ f_{w}\right)_{k\in\bbn}$ where $f_w\in\pi_{n}(S^{mj-j+1})$ and $f_w$ may only be non-zero for finitely many $w\in H_{n,k}\cap H_k(j)$.
\end{remark}

\begin{proposition}\label{w1prop}
For any $m,n\geq 2$, $\mcw_1$ is a direct summand of $\im(\Psi_{n})$.
\end{proposition}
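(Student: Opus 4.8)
The plan is to use the naturally split short exact sequences of Theorem~\ref{exacttheorem} and Lemma~\ref{exactlemma} for $X=\bbe_m$ (so that each $X_j=S^m$), together with the morphism between them induced by the projections $b_k$. Write $\zeta\colon\pi_n(\prod_{j\in\bbn}S^m)\to\pi_n(\bbe_m)$ for the infinite-sum section of $\sigma_{\#}$, and $\check{\zeta}\colon\pi_n(\prod_{j\in\bbn}S^m)\to\check{\pi}_n(\bbe_m)$ for the section $\varprojlim_k s_k$ of $\check{\sigma}$; recall that $\mcw_1$ is by construction the weight-$1$ summand $\im(\check{\zeta})$ (see Section~\ref{sectionshrinkingwedge} and the discussion following the proof of Theorem~\ref{thm1}). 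The key step is the identity $\Psi_n\circ\zeta=\check{\zeta}$, which is essentially the content of the last paragraph of Section~\ref{sectionshrinkingwedge}. To verify it directly, take $[(f_j)_{j\in\bbn}]\in\pi_n(\prod_{j\in\bbn}S^m)$; then $\Psi_n(\zeta([(f_j)_{j\in\bbn}]))=\big([\,b_k\circ\sum_{j=1}^\infty f_j\,]\big)_{k\in\bbn}$, and since $b_k$ collapses $X_{\geq k+1}$ to the basepoint, composition with $b_k$ commutes with the sum operation and trivializes every $f_j$ with $j>k$, so $b_k\circ\sum_{j=1}^\infty f_j\simeq\sum_{j=1}^k f_j=s_k\big([(f_1,\dots,f_k)]\big)$ in $X_{\leq k}$. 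Under the identification $\pi_n(\prod_{j\in\bbn}S^m)=\varprojlim_k\pi_n(\prod_{j=1}^k S^m)$, the class $[(f_j)_{j\in\bbn}]$ corresponds to its system of truncations, so this shows $\Psi_n\circ\zeta=\varprojlim_k s_k=\check{\zeta}$. In particular $\mcw_1=\im(\check{\zeta})=\im(\Psi_n\circ\zeta)\subseteq\im(\Psi_n)$.

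It then remains to show that $\mcw_1$ is a direct summand of $\im(\Psi_n)$, which is a short diagram chase. Put $N=\im(\Psi_n)\cap\im(\check{\partial})$. For $y=\Psi_n(x)\in\im(\Psi_n)$, decompose $x=\zeta(\sigma_{\#}(x))+\bigl(x-\zeta(\sigma_{\#}(x))\bigr)$; the second summand lies in $\ker(\sigma_{\#})=\im(\partial)$, so applying $\Psi_n$ and using $\Psi_n\circ\partial=\check{\partial}\circ\Psi_n$ gives $y=\Psi_n(\zeta(\sigma_{\#}(x)))+\Psi_n\bigl(x-\zeta(\sigma_{\#}(x))\bigr)\in\mcw_1+N$. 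Conversely, $\mcw_1\cap N\subseteq\mcw_1\cap\ker(\check{\sigma})=0$, because $\check{\sigma}\circ\check{\zeta}=\mathrm{id}$ makes $\check{\sigma}$ injective on $\mcw_1=\im(\check{\zeta})$. Hence $\im(\Psi_n)=\mcw_1\oplus N$, as claimed.

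Since the argument is a diagram chase, I do not expect a serious obstacle; the only delicate point is the first step — confirming that $\Psi_n$ carries the specific infinite-sum section $\zeta$ onto the weight-$1$ summand $\mcw_1$ rather than onto some other complement of $\im(\check{\partial})$ in $\check{\pi}_n(\bbe_m)$ — and this is exactly what the naturality of the canonical finite-wedge sections $s_k$, established in the proof of Lemma~\ref{exactlemma}, provides.
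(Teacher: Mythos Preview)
Your proof is correct and follows the same approach as the paper: both use the commutative diagram at the end of Section~\ref{sectionshrinkingwedge} relating the split exact sequences of Theorem~\ref{exacttheorem} and Lemma~\ref{exactlemma} to see that $\Psi_n$ carries the infinite-sum section onto the weight-$1$ summand $\mcw_1$. Your write-up simply makes explicit what the paper compresses into a single sentence---you verify $\Psi_n\circ\zeta=\check{\zeta}$ directly and then spell out the elementary fact that a summand $\mcw_1$ of $\check{\pi}_n(\bbe_m)$ contained in $\im(\Psi_n)$ remains a summand of $\im(\Psi_n)$.
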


\begin{proof}
Recall from the end of Section 2, that we have $\check{\sigma}\circ \Psi_n=\sigma_{\#}$ where $\im(\sigma_{\#})=\phi_{n,\infty}^{-1}(\mcw_1)\cong\mcw_1$ and $\sigma_{\#}$ splits by Theorem \ref{exacttheorem}.
\end{proof}

\begin{corollary}\label{firstcasecor}
If $n<2m-1$, then $\ds \Psi_{n}:\pi_n(\bbe_m)\to \check{\pi}_n(\bbe_m)$ is a split epimorphism.
\end{corollary}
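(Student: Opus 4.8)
The plan is to deduce this almost immediately from Theorem \ref{thm1} and Proposition \ref{w1prop}. First I would observe that the hypothesis $n<2m-1$ is equivalent to $\frac{n-1}{m-1}<2$, so the only admissible index in the direct-sum decomposition of Theorem \ref{thm1} is $j=1$; that is, $\check{\pi}_n(\bbe_m)=\mcw_1\cong\left(\pi_n(S^m)\right)^{\bbn}$, with all higher weight summands vanishing. Equivalently, in the language of Lemma \ref{exactlemma} and the discussion at the end of Section \ref{sectionshrinkingwedge}, the relative group $\check{\pi}_{n+1}\left(\prod_{j\in\bbn}S^m,\bbe_m\right)$ is trivial in this range, so the splitting map $\check{\sigma}\colon\check{\pi}_n(\bbe_m)\to\pi_n\left(\prod_{j\in\bbn}S^m\right)$ has trivial kernel and is therefore an isomorphism.

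Next I would recall, as in the proof of Proposition \ref{w1prop}, that $\check{\sigma}\circ\Psi_n=\sigma_{\#}$ and that, by Theorem \ref{exacttheorem}, $\sigma_{\#}$ admits the natural section $\zeta$ given by the infinite-sum operation. Since $\check{\sigma}$ is an isomorphism here, it follows that $\Psi_n=\check{\sigma}^{-1}\circ\sigma_{\#}$ is surjective. To produce the section, set $s=\zeta\circ\check{\sigma}\colon\check{\pi}_n(\bbe_m)\to\pi_n(\bbe_m)$; then $\check{\sigma}\circ\Psi_n\circ s=\sigma_{\#}\circ\zeta\circ\check{\sigma}=\check{\sigma}$, and since $\check{\sigma}$ is injective this forces $\Psi_n\circ s=\mathrm{id}_{\check{\pi}_n(\bbe_m)}$, as desired. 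Alternatively, one can simply cite Proposition \ref{w1prop} verbatim: it already exhibits $\mcw_1$ as a direct summand of $\im(\Psi_n)$ together with a section, and since $\mcw_1=\check{\pi}_n(\bbe_m)$ when $n<2m-1$ there is nothing further to prove.

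I do not expect any real obstacle: the statement is a formal consequence of the weight decomposition collapsing to its weight-$1$ part in the given range. The only point that deserves an explicit line is the verification that $\check{\sigma}$ is genuinely an isomorphism when $n<2m-1$, which, as noted above, follows from the vanishing of the complementary relative \v{C}ech homotopy group $\check{\pi}_{n+1}\left(\prod_{j\in\bbn}S^m,\bbe_m\right)$.
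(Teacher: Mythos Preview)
Your proposal is correct and follows essentially the same approach as the paper: observe that $n<2m-1$ forces $\frac{n-1}{m-1}<2$ so that $\check{\pi}_n(\bbe_m)=\mcw_1$, and then invoke Proposition~\ref{w1prop}. The explicit construction of the section via $\zeta\circ\check{\sigma}$ that you spell out is just an unpacking of what Proposition~\ref{w1prop} already provides, as you yourself note in your final paragraph.
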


\begin{proof}
Since $\frac{n-1}{m-1}<2$, Theorem \ref{thm1} implies that $\check{\pi}_n(\bbe_m)=\mcw_1$. The splitting of $\Psi_n$ follows from Proposition \ref{w1prop}.
\end{proof}

Next, we consider the special case $n=2m-1$ at the edge of the unstable range. In this situation, we will need to apply infinite sums and Whitehead products described in Section \ref{sectionprelim}.

\begin{lemma}\label{edgelemma}
For $m\geq 2$, the canonical homomorphism $\ds \Psi_{2m-1}:\pi_{2m-1}(\bbe_m)\to \check{\pi}_{2m-1}(\bbe_m)$ is a split epimorphism.
\end{lemma}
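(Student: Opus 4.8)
By Theorem \ref{thm1}, when $n=2m-1$ we have $\check{\pi}_{2m-1}(\bbe_m)\cong \mcw_1\oplus \mcw_2$, since $\frac{2m-2}{m-1}=2$. Here $\mcw_1\cong \pi_{2m-1}(S^m)^{\bbn}$ is the weight-$1$ summand and $\mcw_2\cong \pi_{2m-1}(S^{2m-1})^{\bbn}\cong \bbz^{\bbn}$, because $h(w)=(m-1)L(w)=2m-2$ for a weight-$2$ Hall word $w$, so the associated sphere is $S^{2m-1}$. By Proposition \ref{w1prop}, $\mcw_1$ is already a direct summand of $\im(\Psi_{2m-1})$ and splits off; so the entire task reduces to constructing a homomorphism $\mcw_2\to \pi_{2m-1}(\bbe_m)$ that $\Psi_{2m-1}$ sends isomorphically onto $\mcw_2$, and checking its image is complementary to the splitting of $\mcw_1$.

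The plan is to build an explicit element of $\pi_{2m-1}(\bbe_m)$ realizing each coordinate of $\mcw_2$ by an \emph{infinite Whitehead product sum}. The weight-$2$ Hall words in $H_\infty$ are $[a_i,a_j]$ with $i<j$; enumerate this countable set as $w_1,w_2,\dots$ where $w_\ell=[a_{i_\ell},a_{j_\ell}]$. For $\ell\in\bbn$ let $\iota_\ell:S^m\to \bbe_m$ denote the inclusion of the $\ell$-th wedge summand, viewed as a map converging to $x_0$ as $\ell\to\infty$, and likewise the maps $[\iota_{i_\ell},\iota_{j_\ell}]:S^{2m-1}\to\bbe_m$. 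The key point, guaranteed by Proposition \ref{wh1prop}, is that $\{[\iota_{i_\ell},\iota_{j_\ell}]\}_{\ell\in\bbn}$ converges to $x_0$, so that for any sequence $(c_\ell)\in\bbz^{\bbn}$ the infinite sum $\sum_{\ell=1}^{\infty} c_\ell\,[\iota_{i_\ell},\iota_{j_\ell}]$ defines an element of $\pi_{2m-1}(\bbe_m)$. I would define $\Theta:\bbz^{\bbn}\to\pi_{2m-1}(\bbe_m)$ by $\Theta((c_\ell)_\ell)=\big[\sum_{\ell=1}^{\infty}c_\ell[\iota_{i_\ell},\iota_{j_\ell}]\big]$ and check it is a homomorphism using Proposition \ref{linearity}. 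Then I would verify $\Psi_{2m-1}\circ\Theta$ is the inclusion of $\mcw_2$ followed by $\phi_{n,\infty}$ of the corresponding tuple: applying $\Psi_{2m-1}$ means post-composing with each $b_k:\bbe_m\to X_{\leq k}$, and $b_k\circ[\iota_{i_\ell},\iota_{j_\ell}]$ is the Hall-word Whitehead product $w_\ell$ in $\pi_{2m-1}(X_{\leq k})$ when $i_\ell,j_\ell\leq k$ and is null-homotopic otherwise (Proposition \ref{trivializedprop} and Remark \ref{inducedmapremark}); so in the $k$-th coordinate one gets exactly the finite sum $\sum_{w_\ell\in H_k(2)} c_\ell\, w_\ell\circ(c_\ell\cdot\mathrm{id})$ matching the formula in Remark \ref{genericremark} for an element of $\mcw_2$. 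This shows $\Psi_{2m-1}\circ\Theta$ is an isomorphism onto $\mcw_2$, hence $\Theta$ is injective and $\mcw_2\subseteq\im(\Psi_{2m-1})$.

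To assemble the splitting: combine $\Theta$ with the weight-$1$ splitting $\zeta$ of Theorem \ref{exacttheorem} (which hits $\mcw_1$) to get a map $\mcw_1\oplus\mcw_2\to\pi_{2m-1}(\bbe_m)$; one must check its composite with $\Psi_{2m-1}$ is the identity of $\check\pi_{2m-1}(\bbe_m)$, which amounts to knowing the two pieces don't interfere — and that is immediate since $\Psi_{2m-1}\zeta$ lands in $\mcw_1$ while $\Psi_{2m-1}\Theta$ lands in $\mcw_2$, and these are complementary summands of $\check\pi_{2m-1}(\bbe_m)$. The main obstacle is the middle step: carefully justifying the interchange of the infinite sum with post-composition by $b_k$ and with the Whitehead bracket, so that $\Psi_{2m-1}$ applied to an infinite Whitehead-product sum yields coordinatewise the expected finite sums. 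This is exactly the kind of bookkeeping the Propositions of Section \ref{sectionprelim} (especially Propositions \ref{sumhomotopyprop}, \ref{wh1prop}, \ref{wh2prop}) are designed to handle, via the observation that $b_k$ is continuous and images of all but finitely many summands lie in any prescribed neighborhood of $x_0$; once that is in place the rest is formal.
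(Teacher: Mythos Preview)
Your overall strategy matches the paper's: decompose $\check{\pi}_{2m-1}(\bbe_m)=\mcw_1\oplus\mcw_2$, invoke Proposition~\ref{w1prop} for $\mcw_1$, and build a splitting $\mcw_2\to\pi_{2m-1}(\bbe_m)$ by an infinite sum of Whitehead products. But your construction of that splitting has a genuine gap.

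You assert that Proposition~\ref{wh1prop} guarantees $\{[\iota_{i_\ell},\iota_{j_\ell}]\}_{\ell\in\bbn}$ converges to $x_0$. It does not. Proposition~\ref{wh1prop} requires \emph{both} input sequences to converge to $x_0$, and $\{\iota_{i_\ell}\}_\ell$ fails this: any enumeration of $H_\infty(2)=\{[a_i,a_j]:i<j\}$ must hit $i_\ell=1$ infinitely often, and $\im(\iota_1)$ is the entire first sphere, which lies in no proper basic neighborhood of $x_0$. Consequently $\im([\iota_1,\iota_j])$ contains the first sphere for every $j$, the sequence $\{c_\ell[\iota_{i_\ell},\iota_{j_\ell}]\}_\ell$ does not converge to $x_0$ (for generic $(c_\ell)$), and your infinite sum $\sum_\ell c_\ell[\iota_{i_\ell},\iota_{j_\ell}]$ is not a continuous map. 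The paper points this out explicitly in the paragraph following the lemma: ``one would like to form an infinite sum $\sum_{j=2}^{\infty}[\ell_1,\ell_j]$ \dots\ but this infinite sum map is not continuous.''

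The paper's remedy is exactly what makes the case $n=2m-1$ special. Since here $\pi_{2m-1}(S^{2m-1})=\bbz$, one has $[a_i,a_j]\circ(\epsilon_{i,j}\gamma)=\epsilon_{i,j}[a_i,a_j]=[a_i,\epsilon_{i,j}a_j]$, so bilinearity lets one fold the inner sum \emph{inside} the bracket and define
\[
F_\alpha=\sum_{i=1}^{\infty}\Big[\ell_i,\;\sum_{j=i+1}^{\infty}\epsilon_{i,j}\ell_j\Big].
\]
Now the $i$-th summand has image in $X_{\geq i}$, so this outer sum \emph{does} converge to $x_0$ and $F_\alpha$ is continuous. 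One then checks $b_k\circ F_\alpha$ gives the correct finite sum and that $\alpha\mapsto[F_\alpha]$ is a homomorphism via Propositions~\ref{linearity}, \ref{sumhomotopyprop}, and \ref{wh2prop}. This repackaging via bilinearity is the missing idea in your argument, and it is precisely what fails when $n>2m-1$ (where $[a_i,a_j]\circ\beta$ need not equal $[a_i,\beta'\cdot a_j]$ for any $\beta'$), explaining why the paper can only prove the weaker Theorem~\ref{thm3} in that range.
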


\begin{proof}
First, we apply Theorem \ref{thm1} in the case $n=2m-1$. Since $\frac{2m-2}{m-1}<3$, we have $\check{\pi}_{2m-1}(\bbe_m)= \mcw_1\oplus \mcw_2$. A splitting homomorphism $\mcw_1\to \pi_{n}(\bbe_m)$ of $\Psi_{n}$ is established in Proposition \ref{w1prop} and so it suffices to produce a splitting homomorphism $\mcw_2\to \pi_{n}(\bbe_m)$. Recall from Remark \ref{genericremark}, that a generic element of $\mcw_2$ has the form $\alpha=\left(\sum_{w\in H_{k}(2)}w\circ f_{w}\right)_{k\in\bbn}$ where $f_w\in \pi_{2m-1}(S^{2m-1})$. Since $H_{k}(2)=\{[a_i,a_j]\mid 1\leq i<j\leq k\}$, write $f_{i,j}$ for $f_w$ when $w=[a_i,a_j]$. Let $\gamma=[id_{S^{2m-1}}]$ be the generator of $\pi_{2m-1}(S^{2m-1})$ and write $f_{i,j}=\epsilon_{i,j}\gamma$ for integer $\epsilon_{i,j}\in \bbz$. Since $w\circ -$ is a homomorphism, we have $w\circ f_{i,j}=[a_i,a_j]\circ \epsilon_{i,j}\gamma=\epsilon_{i,j}([a_i,a_j]\circ\gamma)=\epsilon_{i,j}[a_i,a_j]=[a_i,\epsilon_{i,j}a_j]$. Altogether, we can express $\alpha$ more conveniently as
\[\alpha=\left(\sum_{i=1}^{k-1}\sum_{j=i+1}^{k}[a_i,\epsilon_{i,j}a_j]\right)_{k\in\bbn}\]
where the sums in the sequence are directly extended as $k$ increases.

Let $\ell_j:S^m\to \bbe_m$ be the inclusion of the $j$-th sphere so that $a_j=[\ell_j]$ and define an infinite sum map $F_{\alpha}:S^{2m-1}\to \bbe_m$ by\[F_{\alpha}=\sum_{i=1}^{\infty}\left[\ell_i,\sum_{j=i+1}^{\infty}\epsilon_{i,j}\ell_j\right]\]
This sum is continuous since the sequence $\left\{\left[\ell_i,\sum_{j=i+1}^{\infty}\epsilon_{i,j}\ell_j\right]\right\}_{i\in\bbn}$ converges to the wedgepoint of $\bbe_m$. Fix $k\in\bbn$ and let $b_{k}:\bbe_{m}\to X_{\leq k}$ denote the projection map to the union of the first $k$-spheres. For given $i\in\bbn$, note that $b_k\circ \left[\ell_i,\sum_{j=i+1}^{\infty}\epsilon_{i,j}\ell_j\right]$ is constant if $i>k$. If $i\leq k$, then
\begin{eqnarray*}
b_k\circ \left[\ell_i,\sum_{j=i+1}^{\infty}\epsilon_{i,j}\ell_j\right] &= & \left[b_k\circ\ell_i,b_k\circ\sum_{j=i+1}^{\infty}\epsilon_{i,j}\ell_j\right]\\
&=& \left[b_k\circ\ell_i,\sum_{j=i+1}^{\infty}b_k\circ\epsilon_{i,j}\ell_j\right].
\end{eqnarray*}
and we consider two cases. If $i=k$, this is equal to $[\ell_i,c]$ where $c$ is constant. Such a map is null-homotopic within the $i$-th wedge-summand of $\bbe_m$. Lastly, if $i<k$, this is homotopic to $\left[\ell_i,\sum_{j=i+1}^{k}\epsilon_{i,j}\ell_j\right]$. All three of these cases are realized by homotopies having image in $X_{\geq i}$. Thus, Proposition \ref{sumhomotopyprop} gives 
\[b_k\circ F_{\alpha}\simeq\sum_{i=1}^{k-1}\left[\ell_i,\sum_{j=i+1}^{k}\epsilon_{i,j}\ell_j\right].\]
On homotopy classes, we have\[b_{k\#}([F_{\alpha}])=\sum_{i=1}^{k-1}\left[a_i,\sum_{j=i+1}^{k}\epsilon_{i,j}a_j\right]=
\sum_{i=1}^{k-1}\sum_{j=i+1}^{k}\left[a_i,\epsilon_{i,j}a_j\right]\]
where the second equality follows from bilinearity of Whitehead products. Since $k$ was arbitrary, we conclude that $\Psi_{n}([F_{\alpha}])=(b_{k\#}([F_{\alpha}]))_{k\in\bbn}=\alpha$.

For the splitting, it suffices to check that the mapping $\alpha\mapsto [F_{\alpha}]$ defines a homomorphism $\mcw_{2}\to \pi_{2m-1}(\bbe_m)$. Since our representation of $\alpha$ is uniquely determined by the integers $\epsilon_{i,j}$, $j>i$, this function is well-defined. We show that $[F_{\alpha+\beta}]=[F_{\alpha}]+[F_{\beta}]$. Express $\beta\in\mcw_2$ as $\beta=\left(\sum_{i=1}^{k-1}\sum_{j=i+1}^{k}[a_i,\delta_{i,j}a_j]\right)_{k\in\bbn}$ for integers $\delta_{i,j}$, $j>i$ and note that $\alpha+\beta=\left(\sum_{i=1}^{k-1}\sum_{j=i+1}^{k}[a_i,(\epsilon_{i,j}+\delta_{i,j})a_j]\right)_{k\in\bbn}$.

Recalling the formula for $F_{\alpha}$, $F_{\beta}$, and $F_{\alpha+\beta}$, and applying the infinite-sum homotopies from Section \ref{sectionprelim}, we have the following sequence:
\begin{eqnarray*}
F_{\alpha}+F_{\beta} &= & \sum_{i=1}^{\infty}\left[\ell_i,\sum_{j=i+1}^{\infty}\epsilon_{i,j}\ell_j\right]+\sum_{i=1}^{\infty}\left[\ell_i,\sum_{j=i+1}^{\infty}\delta_{i,j}\ell_j\right]\\
&\simeq & \sum_{i=1}^{\infty}\left(\left[\ell_i,\sum_{j=i+1}^{\infty}\epsilon_{i,j}\ell_j\right]+ \left[\ell_i,\sum_{j=i+1}^{\infty}\delta_{i,j}\ell_j\right]\right)\\
&\simeq & \sum_{i=1}^{\infty}\left[\ell_i,\sum_{j=i+1}^{\infty}\epsilon_{i,j}\ell_j+\sum_{j=i+1}^{\infty}\delta_{i,j}\ell_j\right]\\
&\simeq & \sum_{i=1}^{\infty}\left[\ell_i,\sum_{j=i+1}^{\infty}\epsilon_{i,j}\ell_j+\delta_{i,j}\ell_j\right]\\
&\simeq & \sum_{i=1}^{\infty}\left[\ell_i,\sum_{j=i+1}^{\infty}(\epsilon_{i,j}+\delta_{i,j})\ell_j\right].\\
&= & F_{\alpha+\beta}
\end{eqnarray*}
\end{proof}

Corollary \ref{firstcasecor} and Lemma \ref{edgelemma} complete the proof of Theorem \ref{thm2}.

In the unstable range $n>2m-1$, the same argument for surjectivity does not work and it does not appear that Hilton-Milnor invariants overcome the obstacle. Fix $0\neq \beta\in \pi_n(S^{2m-1})$. Note that the element\[\left(\left[a_1,\sum_{i=2}^{k}a_i\right]\circ\beta\right)_{k\in\bbn} =\left(\left(\sum_{i=2}^{k}[a_1,a_i]\right)\circ\beta\right)_{k\in\bbn} \]of $\mcw_2$ is the image of $\left[\ell_1,\sum_{j=2}^{\infty}\ell_j\right]\circ\beta\in \pi_n(\bbe_m)$ under $\Psi_n$. In contrast, it is not clear if $\left(\sum_{i=2}^{k}([a_1,a_i]\circ\beta)\right)_{k\in\bbn}\in \mcw_2$ lies in the image of $\Psi_{n}$. Indeed, one would like to form an infinite sum $\sum_{j=2}^{\infty}[\ell_1,\ell_j]$ and apply Hilton-Milnor invariants of $\beta$ in the finite-approximation levels but this infinite sum map is not continuous. The lack of obvious continuous representatives may be more analogous to the situation in dimension $n=m=1$ where the homomorphism $\Psi_{1}:\pi_1(\bbe_1)\to \check{\pi}_1(\bbe_1)$ fails to be surjective. Here, the analogous sequence of products of commutators \[(1,[a_1,a_2],[a_1,a_2][a_1,a_3],[a_1,a_2][a_1,a_3][a_1,a_4],\dots )\in \check{\pi}_1(\bbe_1)\] has no continuous representative in $\pi_1(\bbe_1)$. That surjectivity holds in the critical case $n=2m-1$ is then quite striking since it is the only situation where one can easily represent such growing sequences of commutators/Whitehead products. While this leaves us with an incomplete picture of the image of $\Psi_n$ (for $n>2m-1$), we identify a natural candidate.

For $i\in\bbn$, let $M_i$ be the collection of Hall words $w\in H_{\infty}$ such that
\begin{enumerate}
\item $L(w)\geq 2$,
\item $i$ is the smallest index for which $a_i$ appears as a letter in $w$. 
\end{enumerate}
Let $\mcg_{n}(m)=\prod_{i\in\bbn}\bigoplus_{w\in M_i}\pi_{n}(S^{(m-1)L(w)+1})$ and note that $\mcg_{n}(m)$ is naturally a subgroup of $\prod_{w\in H_{\infty}}\pi_n(S^{(m-1)L(w)+1})$. We represent an element of $\mcg_{n}(m)$ as a tuple $(\alpha_{i,w})_{i,w}$ for $\alpha_{i,w}\in \pi_{n}(S^{(m-1)L(w)+1})$ indexed over pairs $(i,w)$ with $i\in\bbn$ and $w\in M_i$.

\begin{proposition}\label{splittingprop}
There is a canonical monomorphism $\Theta:\mcg_{n}(m)\to \pi_n(\bbe_m)$ such that $ \Psi_{n}\circ\Theta(\mcg_{n}(m))=\phi_{n,\infty}(\mcg_{n}(m))$.
\end{proposition}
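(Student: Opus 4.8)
The plan is to realize every element of $\mcg_n(m)$ by a single infinite-sum map, extending the construction of $F_\alpha$ from Lemma \ref{edgelemma} from weight-$2$ brackets to Hall words of arbitrary weight. For a Hall word $w\in H_\infty$ write $\widehat{w}:S^{(m-1)L(w)+1}\to\bbe_m$ for the iterated Whitehead product of maps obtained from $w$ by replacing each letter $a_j$ with the inclusion $\ell_j:S^m\to\bbe_m$; its image lies in the finite subwedge of $\bbe_m$ indexed by the letters occurring in $w$. Given $\xi=(\alpha_{i,w})_{i,w}\in\mcg_n(m)$, fix a representative of each $\alpha_{i,w}$ and, for each $i\in\bbn$, let $g_i^\xi:S^n\to\bbe_m$ be the (finite, hence legitimate) concatenation $\sum_{w\in M_i}\widehat{w}\circ\alpha_{i,w}$ over the finitely many $w\in M_i$ with $\alpha_{i,w}\neq 0$, taken in the fixed Hall order. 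Since every $w\in M_i$ has $a_i$ as its least-index letter, $\im(g_i^\xi)\subseteq X_{\geq i}$, so $\{g_i^\xi\}_{i\in\bbn}$ converges to the wedgepoint and $\sum_{i=1}^{\infty}g_i^\xi\in\Omega^n(\bbe_m)$ is defined. I would set $\Theta(\xi)=[\sum_{i=1}^{\infty}g_i^\xi]$; independence of the chosen representatives follows from Proposition \ref{sumhomotopyprop}, since changing a representative of some $\alpha_{i,w}$ alters $g_i^\xi$ by a homotopy supported in the finite subwedge indexed by the letters of $w$, which lies in $X_{\geq i}$, and such homotopies converge to the wedgepoint.

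The heart of the argument is the identity $\Psi_n\circ\Theta=\phi_{n,\infty}|_{\mcg_n(m)}$, where $\xi$ is viewed in $\prod_{w\in H_\infty}\pi_n(S^{h(w)+1})$ via the inclusion $\xi\mapsto(\alpha_w)_w$ with $\alpha_w:=\alpha_{i(w),w}$ when $L(w)\geq 2$ ($i(w)$ the least index occurring in $w$) and $\alpha_w:=0$ when $L(w)=1$; this inclusion is injective because the sets $M_i$ are pairwise disjoint with union $\{w\in H_\infty\mid L(w)\geq 2\}$. To verify the identity, fix $k$ and compute $b_k\circ\sum_i g_i^\xi$. For $i>k$ one has $\im(g_i^\xi)\subseteq X_{\geq k+1}$, which $b_k$ collapses, so $b_k\circ g_i^\xi$ is constant; for $i\leq k$, Remark \ref{inducedmapremark} lets us commute $b_k$ inside each $\widehat{w}$, and Proposition \ref{trivializedprop} makes the summand $\widehat{w}\circ\alpha_{i,w}$ null-homotopic whenever $w$ contains a letter $a_j$ with $j>k$ and leaves it unchanged otherwise; Proposition \ref{sumhomotopyprop} reassembles the finitely many surviving summands to give
\[b_{k\#}(\Theta(\xi))=\sum_{w\in H_k,\,L(w)\geq 2}\widehat{w}\circ\alpha_w=\sum_{w\in H_{n,k}}\widehat{w}\circ\alpha_w,\]
which is exactly the $k$-th coordinate of $\phi_{n,\infty}(\xi)$ from the formula in Theorem \ref{cechtheorem1} (the second equality holds since $\alpha_w=0$ for $L(w)=1$ and $\pi_n(S^{h(w)+1})=0$ for $w\notin H_{n,k}$). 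As $k$ is arbitrary, $\Psi_n(\Theta(\xi))=\phi_{n,\infty}(\xi)$, so $\Psi_n\circ\Theta(\mcg_n(m))=\phi_{n,\infty}(\mcg_n(m))$; since $\phi_{n,\infty}$ is an isomorphism (Theorem \ref{cechtheorem1}) and the displayed inclusion is injective, $\Theta$ is injective. That $\Theta$ is a homomorphism is then checked as in Lemma \ref{edgelemma}: for $\xi=(\alpha_{i,w})$ and $\eta=(\beta_{i,w})$, the maps $g_i^{\xi+\eta}$ and $g_i^{\xi}+g_i^{\eta}$ have image in a common finite subwedge of $X_{\geq i}$ and represent the same class in its $n$-th homotopy group (each $\widehat{w}\circ(-)$ is a homomorphism and $\pi_n$ is abelian), the connecting homotopies form a sequential homotopy converging to the wedgepoint, so Proposition \ref{sumhomotopyprop} gives $\sum_i g_i^{\xi+\eta}\simeq\sum_i(g_i^\xi+g_i^\eta)$, and Proposition \ref{linearity} gives $\sum_i(g_i^\xi+g_i^\eta)\simeq\sum_i g_i^\xi+\sum_i g_i^\eta$.

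I expect the only genuine difficulty to be organizational rather than conceptual, exactly as in Lemma \ref{edgelemma}: one must track the supports of every auxiliary map and homotopy so that all infinite concatenations are continuous and each invocation of Propositions \ref{linearity} and \ref{sumhomotopyprop} is licensed — no homotopy-theoretic input beyond Hilton's Theorem and the infinite-sum calculus of Section \ref{sectionprelim} is needed. It is also worth recording why the recipe cannot be pushed further: if, for a fixed $i$, infinitely many $\alpha_{i,w}$ were allowed to be nonzero, then $g_i^\xi$ would have to be an infinite sum of Whitehead products all sharing the letter $\ell_i$, which is precisely the non-continuous object flagged in the discussion preceding the proposition; in this sense $\mcg_n(m)$ is the natural ceiling of the method.
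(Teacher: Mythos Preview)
Your proposal is correct and follows essentially the same construction as the paper: define $\Theta(\xi)$ as the infinite sum $\sum_i g_i^\xi$ with $g_i^\xi$ the finite concatenation over $\supp_i(\xi)\subseteq M_i$, then verify $\Psi_n\circ\Theta=\phi_{n,\infty}|_{\mcg_n(m)}$ level by level and check the homomorphism property via sequential homotopies in $X_{\geq i}$. The only differences are organizational --- you deduce injectivity from $\Psi_n\circ\Theta=\phi_{n,\infty}$ rather than leaving it implicit, and your homomorphism check (comparing $g_i^{\xi+\eta}$ with $g_i^\xi+g_i^\eta$ inside a common finite subwedge of $X_{\geq i}$) is a bit more streamlined than the paper's explicit bookkeeping of the cancellation set $F_i$.
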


\begin{proof}
Define $\Theta:\mcg_{n}(m)\to \pi_{n}(\bbe_m)$ as follows. For Hall word $w$, let $\ell_{w}:S^{(m-1)L(w)+1}\to \bbe_m$ be the canonical map representing $w$ given by replacing each appearance of $a_i$ with $\ell_i$. Given $\alpha=(\alpha_{i,w})_{i,w}\in\mcg_{n}(m)$, note that the set $\supp_i(\alpha)=\{w\in M_i\mid \alpha_{i,w}\neq 0\}$ is finite for each $i$ (by definition of $\mcg_{n}(m)$). If $w\in \supp_i(\alpha)$, let $f_{i,w}:S^n\to S^{(m-1)L(w)+1}$ be a map representing $\alpha_{i,w}$ for each pair $(i,w)$. Define \[\Theta(\alpha)=\left[\sum_{i=1}^{\infty}\sum_{w\in \supp_{i}(\alpha)}\ell_{w}\circ f_{i,w}\right].\]

To see that $\Theta$ is well-defined, let $f_{i,w}'$ be another family of representatives for $\alpha_{i,w}$. For given pair $(i,w)$, we have $f_{i,w}\simeq f_{i,w}'$ and thus $\sum_{w\in \supp_{i}(\alpha)}\ell_w\circ f_{i,w}\simeq \sum_{w\in \supp_{i}(\alpha)}\ell_w\circ f_{i,w}$ in $X_{\geq i}$. This forms a sequential homotopy indexed over $i$ and thus $\sum_{i=1}^{\infty}\sum_{w\in \supp_{i}(\alpha)}\ell_{w}\circ f_{i,w}\simeq \sum_{i=1}^{\infty}\sum_{w\in \supp_{i}(\alpha)}\ell_{w}\circ f_{i,w}'$ by Proposition \ref{sumhomotopyprop}. 

To check that $\Theta$ is a homomorphism, let $\beta=(\beta_{i,w})_{i,w}\in\mcg_{n}(m)$ with representing maps $[g_{i,w}]=\beta_{i,w}$. Note that $\supp_{i}(\alpha+\beta)\subseteq \supp_i(\alpha)\cup \supp_i(\beta)$ where $F_i=(\supp_i(\alpha)\cup \supp_i(\beta))\backslash \supp_{i}(\alpha+\beta)$ is the finite (possibly empty) set of $w\in M_i$ such that $\alpha_{i,w}$ and $\beta_{i,w}$ are non-trivial and $\alpha_{i,w}=-\beta_{i,w}$. Note that $\sum_{i=1}^{\infty}\sum_{w\in F_i}\ell_w\circ (f_{i,w}+g_{i,w})$ is null-homotopic. Thus applying linearity of infinite sums to $A=\left[\sum_{i=1}^{\infty}\sum_{w\in F_i}(\ell_w\circ f_{i,w})+\sum_{i=1}^{\infty}\sum_{w\in F_i}(\ell_w\circ g_{i,w})\right]$, we have $A=0$. Then applications of Propositions \ref{linearity} and \ref{sumhomotopyprop} give that $\Theta(\alpha+\beta)$ is equal to:
\begin{eqnarray*}
 && \left[\sum_{i=1}^{\infty}\sum_{w\in \supp_{i}(\alpha+\beta)}\ell_{w}\circ (f_{i,w}+g_{i,w})\right]\\
&=& \left[\sum_{i=1}^{\infty}\sum_{w\in \supp_{i}(\alpha+\beta)}\ell_{w}\circ f_{i,w}+\sum_{w\in \supp_{i}(\alpha+\beta)}\ell_{w}\circ g_{i,w}\right]+A\\
&=& \left[\sum_{i=1}^{\infty}\left(\sum_{w\in \supp_{i}(\alpha+\beta)\cup F_i}\ell_{w}\circ f_{i,w}+\sum_{w\in \supp_{i}(\alpha+\beta)\cup F_i}\ell_{w}\circ g_{i,w}\right)\right]\\
&=& \left[\sum_{i=1}^{\infty}\left(\sum_{w\in \supp_{i}(\alpha)}\ell_{w}\circ f_{i,w}+\sum_{w\in \supp_{i}(\beta)}\ell_{w}\circ g_{i,w}\right)\right]\\
&=& \left[\sum_{i=1}^{\infty}\sum_{w\in \supp_{i}(\alpha)}\ell_{w}\circ f_{i,w}\right]+\left[\sum_{i=1}^{\infty}\sum_{w\in \supp_{i}(\beta)}\ell_{w}\circ g_{i,w}\right]\\
&=& \Theta(\alpha)+\Theta(\beta)
\end{eqnarray*}
For $\alpha\in \mcg_{n}(m)$ given above, we have $\phi_{n,\infty}(\alpha)=\left(\sum_{i=1}^{k}\sum_{w\in M_i\cap H_{n,k}}w\circ \alpha_{i,w}\right)_{k\in\bbn}$ and $\Psi_n(\Theta(\alpha))=\left(\left[\sum_{i=1}^{k}\sum_{w\in \supp_{i}(\alpha)\cap H_{n,k}}\ell_{w}\circ f_{i,w}\right]\right)_{k\in\bbn}$. Since these two are clearly equal, we have established the equality $ \Psi_{n}\circ\Theta(\mcg_{n}(m))=\phi_{n,\infty}(\mcg_{n}(m))$.
\end{proof}

\begin{proof}[Proof of Theorem \ref{thm3}]
The first summand $\prod_{\bbn}\pi_n(S^m)$ comes from the splitting of the weight-$1$ summand $\mcw_1\to \pi_n(\bbe_m)$ in Theorem \ref{exactlemma}. The second summand we construct as follows. Proposition \ref{splittingprop} and the bijectivity of $\phi_{n,\infty}$ imply that $H=\Theta(\mcg_{n}(m))$ is a subgroup of $\pi_n(\bbe_m)$ for which the restriction $\Psi_{n}|_{H}:H\to \phi_{n,\infty}(\mcg_{n}(m))$ is an isomorphism. To show that $H$ splits in a summand distinct from the weight-$1$ summand, we must show that $H\leq \ker(\sigma_{\#})\leq \pi_n(\bbe_m)$ where $\sigma:X\to \prod_{j\in\bbn}S^m$ is the inclusion. Let $\alpha=(\alpha_{i,w})\in \mcg_n(m)$ with $\alpha_{i,w}=[f_{i,w}]$. If $p_j:\bbe_m\to X_j$ is the retraction to the $j$-th sphere, then $p_j\circ \ell_{w}\circ f_{i,w}$ is null-homotopic for all $i,j\in\bbn$ and $w\in M_i$ (since $\ell_w$ is an iterated Whitehead product map determined by a Hall word $w$, which involves at least two distinct letters). Thus $[\ell_{w}\circ f_{i,w}]\in \ker(\sigma_{\#})$ for all pairs $(i,w)$ with $i\in \bbn$, $w\in M_i$. It is shown in \cite[Lemma 4.9]{BrazasWHProducts} that $\ker(\sigma_{\#})$ is ``closed under infinite sums" and it follows directly that $\Theta(\alpha)=\left[\sum_{i=1}^{\infty}\sum_{w\in \supp_{i}(\alpha)}\ell_{w}\circ f_{i,w}\right]\in \ker(\sigma_{\#})$. Hence, $H\leq \ker(\sigma_{\#})$.

Finally, it suffices to find an expression for $\mcg_{n}(m)$ that matches the expression in the statement of the theorem. For all $j\geq 2$, let $M_i(j)=\{w\in M_i\mid L(w)=j\}$ be the collection of Hall words of weight $j$ with $i$ being the smallest index for which $a_i$ appears in $w$ and note that $M_i(j)$ is countably infinite. Since $\{M_i(j)\}_{j\geq 2}$ is a partition of $M_i$ and $\pi_{n}(S^{(m-1)j+1})$ can only be non-trivial if $2\leq j\leq \frac{n-1}{m-1}$, we have
\begin{eqnarray*}
\mcg_{n}(m) & = & \prod_{i\in\bbn}\bigoplus_{w\in M_i}\pi_{n}(S^{(m-1)L(w)+1})\\
&\cong & \prod_{i\in\bbn}\left(\bigoplus_{2\leq j\leq \frac{n-1}{m-1}}\bigoplus_{w\in M_i(j)}\pi_{n}(S^{(m-1)j+1})\right)\\
&\cong & \bigoplus_{2\leq j\leq \frac{n-1}{m-1}}\left(\prod_{i\in\bbn}\bigoplus_{w\in M_i(j)}\pi_{n}(S^{(m-1)j+1})\right)
\end{eqnarray*}
where the last isomorphism is justified by the fact that finite direct sums commute with arbitrary direct products. By enumerating $M_i(j)$ and replacing this indexing set with $\bbn$, for each $i$, we match the form appearing in the statement of the theorem.
\end{proof}


\begin{thebibliography}{99}
\expandafter\ifx\csname url\endcsname\relax

\fi
\expandafter\ifx\csname urlprefix\endcsname\relax

\fi


\bibitem{BarrattMilnor} M.G.~Barratt, J.~Milnor, \emph{An Example of Anomalous Singular Homology}, Proc. Amer. Math. Soc. {\bf 13} (1962), no. 2, 293-297.



\bibitem{BrazasWHProducts} J.~Brazas, \emph{Identities for Whitehead products and infinite sums}, Topology Appl. 362 (2025) 109232.




















\bibitem{EK00higher} K.~Eda, K.~Kawamura, \emph{Homotopy and homology groups of the $n$-dimensional Hawaiian earring}, Fundamenta Mathematicae {\bf 165} (2000) 17--28.



%









\bibitem{HallMarsh} M. Hall, \textit{A basis for free Lie rings and higher commutators in free groups}, Proc. Amer. Math. Soc. {\bf 1} (1950), no. 5, 575-581.

\bibitem{HallPhilip} P. Hall, \textit{A contribution to the theory of groups of prime-power order}, Proc. London Math. Soc. {\bf 36} (1934) 29-95.


\bibitem{HiltonSpheres} P.J.~Hilton, \emph{On the Homotopy Groups of the Union of Spheres}, J. London Math. Soc., Volume s1-30 (1955), no. 2, 154-172.



\bibitem{Kawamurasuspensions} K.~Kawamura, \emph{Low dimensional homotopy groups of suspensions of the Hawaiian earring}, Colloq. Math. {\bf 96} (2003), no. 1, 27-39.



\bibitem{MS82} S.~Marde\v{s}i\' c, J.~Segal, \emph{Shape theory}, North-Holland Publishing Company, 1982.




\bibitem{Serrefinite} Serre, Jean-Pierre, \textit{Homologie singuli\'ere des espaces fibr\'es. Applications}. Annals of Mathematics, Second Series {\bf 54} (1951), no. 3, 425-505.




\bibitem{whiteheadjhc} J.H.C. Whitehead, \textit{On adding relations to homotopy groups}, Annals of Mathematics {\bf 42} (1941), no. 2, 409-428.

\bibitem{whiteheadGWproducts} G.W. Whitehead, \textit{On products in homotopy groups}, Annals of Mathematics {\bf 47} (1946), no. 3, 460-475.

\bibitem{WhiteheadEOH} G.W.~Whitehead, \textit{Elements of Homotopy Theory}, Springer Verlag, Graduate Texts in Mathematics. 1978.






\end{thebibliography}
\end{document}